\newtheorem{theorem}{Theorem}
\newtheorem{corollary}[theorem]{Corollary}
\newtheorem{example}{Example}
\newtheorem{lemma}[theorem]{Lemma}
\newenvironment{proof}[1][Proof]{\noindent\textbf{#1.} }{\ \rule{0.5em}{0.5em}}
\def\lab(#1)#2{\put(#1){\makebox(0,0)[c]{#2}}}
\begin{document}
\title{Relay Augmentation for Lifetime Extension of Wireless Sensor Networks}

\author{Marcus~Brazil, Charl~Ras, Doreen~Thomas}
\date{}
\maketitle

\renewcommand{\baselinestretch}{1.8}\normalsize

{\normalsize\begin{abstract}
We propose a novel relay augmentation strategy for extending the lifetime of a certain class of wireless sensor networks. In this class sensors are located at fixed and pre-determined positions and all communication takes place via multi-hop paths in a fixed routing tree rooted at the base station. It is assumed that no accumulation of data takes place along the communication paths and that there is no restriction on where additional relays may be located. Under these assumptions the optimal extension of network lifetime is modelled as the Euclidean $k$-bottleneck Steiner tree problem. Only two approximation algorithms for this NP-hard problem exist in the literature: a minimum spanning tree heuristic (MSTH) with performance ratio $2$, and a probabilistic $3$-regular hypergraph heuristic (3RHH) with performance ratio $\sqrt{3}+\epsilon$. We present a new iterative heuristic that incorporates MSTH and show via simulation that our algorithm performs better than MSTH in extending lifetime, and outperforms 3RHH in terms of efficiency.
\end{abstract}}

\section{Introduction}
Wireless sensor networks (WSNs) consist of small sensing devices that can be readily deployed in diverse environments to form distributed
wireless networks for collecting information in a robust and autonomous manner. Although early research was mainly motivated by potential
military uses, there are now many other important applications such as fault detection, environmental habitat monitoring, irrigation and terrain
monitoring (see, e.g., \cite{arampatzis}, \cite{mainwaring}). An example of the latter is the proposed use of WSNs to provide an early warning
system for bushfires in Australia. Deploying smart sensors in strategically selected areas can lead to early detection and an increased
likelihood of success in fire extinguishing efforts. Other applications include pollution control, climate control in large buildings, and
medical applications using implantable devices.

In this paper we assume that an initial WSN deployment phase has already taken place and that the locations of the sensors are known. In the relay augmentation phase a bounded number of relays are deterministically introduced to the sensing field, whereupon a fixed routing tree rooted at the base station and spanning all nodes is constructed. The power levels of the sensors and relays are then adjusted so that every node is able to transmit data to its parent in the routing tree. We assume that all sensors and relays transmit data at the same constant rate throughout the lifetime of the network.

In most WSNs the bulk of a sensor's energy consumption is attributable to communication. In turn, the energy consumed during data transmission is proportional to some power (usually between $2$ and $4$) of the communication distance. Therefore, under the above assumptions, the time till first node failure will be determined by the length of the longest edge in the routing tree, which we call the \textit{bottleneck edge}. There are many important WSN applications satisfying this property, including biomedical sensor networks \cite{cheng}, certain environmental monitoring networks, and natural disaster early detection systems. In many types of habitat monitoring applications, for example, the sensors are deployed at the locations of pre-determined data sources \cite{mainwaring}. Sparsity of the sensor locations may then necessitate the introduction of relays to obtain a connected network and to optimize the lifetime of the resulting sparse network \cite{xu}. Furthermore, many WSN applications either utilise simple aggregation functions based on maximum/minimum or average readings, or use advanced data fusion techniques to limit accumulation \cite{boulis},\cite{intan},\cite{krish}. Therefore our assumption of a uniform data transmission rate for all nodes is also realistic.

The problem of optimal relay augmentation for extending the lifetime (i.e., time till first node-death) of WSNs of the above type has previously been modelled as the \textit{$k$-bottleneck Steiner tree problem} ($k$-BST problem) or its dual, the \textit{minimum Steiner point tree problem} \cite{lloyd},\cite{brazil2},\cite{brazil3},\cite{bred}. The objective of the $k$-BST problem is, given a set of planar nodes (representing the sensors and called \textit{terminals}), to introduce at most $k$ new nodes (representing the relays and called \textit{Steiner points}) so that the resulting minimum spanning tree on the complete set of nodes minimises the length of the bottleneck edge. {As an example illustrating this problem consider Figure \ref{figBotDef}, where sensors are represented by black filled circles, relays by open circles, and bottleneck edges by grey lines. In Figure \ref{figBotDef}(a) no relays are added and the given sensors are connected by a \textit{minimum spanning tree} (MST). In Figure \ref{figBotDef}(b) two relays are included, which clearly gives a solution with shorter bottleneck edge than in Figure \ref{figBotDef}(a). In fact, the solution in Figure \ref{figBotDef}(b) is optimal for the given nodes when at most two Steiner points are allowed (note that other optimal solutions for the given terminals also exist).}

In this paper we introduce a new heuristic for this NP-hard problem, and provide simulation results that demonstrate a significant improvement in performance of our algorithm over the currently best performing algorithms. {In particular, we compare our algorithm to the \textit{minimum spanning tree heuristic} (MSTH) of ~Wang and Du \cite{wang}. Apart from making use of an exact algorithm for the $1$-BST problem developed by Bae et al.\cite{bae}, our algorithm also incorporates MSTH as a module.} In Section \ref{related} we look at related work. Section \ref{section1} formalises our network model and describes some properties of optimal solutions to the $k$-BST problem. In
Section \ref{section2} we provide necessary details of existing heuristic algorithms for solving the $k$-BST problem, as well as our new iterative $1$-BST heuristic algorithm, and provide an analysis, with examples, of this
algorithm in Section \ref{section3a}. Section \ref{section3} is devoted to an empirical demonstration of the performance of our algorithm.

\section{Related work}\label{related}
There are several constraints and network performance metrics - for instance coverage, power usage, lifetime and cost - that predetermine an
optimal WSN deployment strategy. The performance metric that we are interested in is lifetime since it is pertinent to such a large class
of WSN scenarios. We define the lifetime of a network as the time till first node failure, which is generally due to battery depletion.

Sensors are often overtaxed due to an uneven distribution of traffic flow. Many lifetime extending strategies in the literature therefore strive to dynamically adjust the routing topology in order to relieve the burden on
these sensors; see \cite{boua},\cite{kalp},\cite{zhang}. Unfortunately, these techniques are only germane if multiple available paths exist between nodes in the network. In WSNs with low connectivity or where routing protocols are limited by other application-specific or topological factors, deployment based strategies may be more valuable.

Often the most highly burdened sensors are within close proximity to the base station \cite{wang3}, hence the well-investigated objective of ``doughnut" or ``energy-hole" mitigation around the base station. Deploying additional relays close to the base station is one possible solution to the energy-hole problem, and this method has an analogue in density-varying random deployment strategies \cite{xin}. This approach, however, is only appropriate where there is a significant accumulation of data close to the base station. Given the assumption of a uniform data transmission rate for the networks dealt with in the current paper, these ``energy-hole algorithms" are not comparable to our algorithm.

Augmenting a network by deploying additional (non-sensing) relays is a powerful and relatively inexpensive method of optimising many topology
dependent WSN objectives. This concept is not new to the literature, and has been considered under various network models and objectives. Relay
augmentation strategies exist that supplement the primary objective of lifetime extension with objectives related to coverage \cite{du-2},
connectivity \cite{xu} and balanced traffic flow \cite{li},\cite{wang2}.

As mentioned above, here we model optimal lifetime extension as a $k$-bottleneck Steiner problem. This problem was introduced by Saraffzedeh and Wong \cite{sarrafzadeh}, and various authors have considered it in the context of facility location science \cite{drezner},\cite{love}. The problem is NP-hard; in fact, unless P=NP, no polynomial-time
algorithm exists for the problem in the Euclidean plane with a \textit{performance ratio} less than $\sqrt{2}$, {where this ratio is defined as the theoretically largest possible value attained when dividing the length of the bottleneck edge produced by a given polynomial-time algorithm by the length of the bottleneck edge in an optimal solution}. ~Wang and Du, the authors who
demonstrated this complexity bound, present in \cite{wang} the first deterministic approximation algorithm for the $k$-bottleneck problem in
both the rectilinear and Euclidean planes. They also prove that the performance ratio of their algorithm, the afore-mentioned minimum spanning tree
heuristic, is bounded above by $2$ for each of these metrics. Du et al. \cite{du1} describe a probabilistic $3$-regular hypergraph heuristic ($3$RHH) with performance ratio $\sqrt{3}+\epsilon$ which runs in $\frac{1}{\epsilon}\times\mathrm{poly}(n,k)$ time, where $n$ is the number of given terminals. Recently Bae et al. \cite{bae1},\cite{bae} developed exact algorithms with exponential complexity.

\section{Sensor Network Model and Properties}\label{section1}
The energy consumption for each node in a WSN can be modelled, as in \cite{wang2}, by $$\mathbb{E}(r)=r^\alpha + c$$ where $r$ is packet transmission distance, $c$ is a constant
and is small relative to $r^\alpha$, and $\alpha\in [2,4]$. The power consumption attributable to packet receipt is assumed to be negligible. Let $\ell_{\max}(T)$ denote the length of the longest edge in a tree $T$. For any set $X$ of $n$ embedded terminals, let $\mathcal{C}(X,k)$ be the
set of all trees interconnecting $X$ and at most $k$ other points in the plane.

\noindent \textbf{Definition.} The \emph{$k$-bottleneck Steiner tree problem}, or $k$-BST problem, has the following form:
\begin{quote}\begin{description}
    \item[\textbf{Given}] A set $X$ of $n$ points (terminals) in $\mathbb{R}^2$, and a positive integer $k$.
    \item[\textbf{Find}] A set $S$ (the Steiner points) of at most $k$ points in $\mathbb{R}^2$, and a spanning tree $T$ on $X\cup S$ such that
    $\ell_{\max}(T) \leq \ell_{\max}(T^{\,\prime})$ for any $T^{\,\prime}\in \mathcal{C}(X,k)$.
\end{description} \end{quote}

We refer to $T$ as a \textit{minimum $k$-bottleneck Steiner tree ($k$-MBST)}. In general, $k$-MBSTs are not easy to construct. {The following results, culminating in Corollary \ref{deg5MST}, allow us to restrict the search-space somewhat.} The next lemma from \cite{brazil} is an extension of the Swapping Algorithm found in \cite{lee}. It is a consequence of the matroid properties of minimum spanning trees.

\begin{lemma}\label{MSTswap}Let $T$ be a minimum spanning tree on the terminal set $X$, and let $T^{\,\prime}$ be a spanning tree for $X$.
We can transform $T^{\,\prime}$ to $T$ by a series of edge swaps, where each swap involves replacing an edge $e_i \in E(T^{\,\prime})$ by $e_j
\in E(T)$ such that $\vert e_i\vert \geq \vert e_j\vert$, and at each stage the graph is a spanning tree.
\end{lemma}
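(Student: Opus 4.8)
The plan is to induct on the number of edges that $T'$ and $T$ fail to share, that is, on $d := |E(T')\setminus E(T)| = |E(T)\setminus E(T')|$ (the two counts agree since both trees have $|X|-1$ edges). When $d=0$ the two trees coincide and there is nothing to prove, so I would assume $d\ge 1$ and fix any edge $e_j\in E(T)\setminus E(T')$. The goal of a single inductive step is to perform one swap that replaces some edge of $T'$ by $e_j$, leaves the graph a spanning tree, does not increase edge length, and strictly decreases $d$; the full statement then follows by applying the inductive hypothesis to the resulting tree.

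To pin down the edge to be removed I would use the two standard structural facts about trees and minimum spanning trees. First, deleting $e_j$ from $T$ splits $T$ into two components and so induces a cut $(A,B)$ of $X$ for which $e_j$ is the \emph{only} edge of $T$ crossing $(A,B)$. Second, because $T$ is a minimum spanning tree, the cut property guarantees that $e_j$ is a minimum-length edge of the complete graph on $X$ crossing $(A,B)$, so $|e_j|\le |e|$ for every crossing edge $e$. On the $T'$ side, adding $e_j$ to $T'$ creates a unique cycle $C$, made up of $e_j$ together with the $T'$-path $P$ joining the endpoints of $e_j$. Since those endpoints lie on opposite sides of $(A,B)$, the path $P$ must cross the cut, and I would take $e_i$ to be any edge of $P$ that does so.

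This single choice of $e_i$ does all the work. Being a crossing edge distinct from $e_j$, it cannot lie in $T$ (whose only crossing edge is $e_j$), so $e_i\in E(T')\setminus E(T)$, and replacing it by $e_j$ lowers $d$ by one. Being an edge of the unique cycle $C$, its removal from $T'+e_j$ restores a spanning tree, so $T'' := (T'\setminus\{e_i\})\cup\{e_j\}$ is again a spanning tree on $X$. Finally, because $e_i$ crosses $(A,B)$, the cut property yields $|e_i|\ge |e_j|$, which is exactly the length condition the lemma demands of the swap. Applying the inductive hypothesis to $T''$, which differs from $T$ in one fewer edge, then completes the transformation.

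The step needing the most care — and the only place where minimality of $T$ enters — is the length inequality $|e_i|\ge |e_j|$, since this is what forces the entire swapping sequence to be non-increasing in edge length. Everything rests on selecting $e_i$ as an edge of $P$ crossing the cut induced by $e_j$: this one choice simultaneously keeps the graph a spanning tree, guarantees progress toward $T$, and (via the cut property) prevents the swap from lengthening any edge. I would also note explicitly that when edge lengths coincide the conclusion is still only $|e_i|\ge |e_j|$, which is all the statement claims.
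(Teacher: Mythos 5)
Your proof is correct, but it takes a different route from the paper, which in fact offers no self-contained argument at all: the lemma is imported from an external reference (the authors cite it as an extension of the Swapping Algorithm in Lee's combinatorial optimization text) and is justified only by the remark that it ``is a consequence of the matroid properties of minimum spanning trees.'' What you have written is the standard, fully elementary graph-theoretic proof: induction on the size of the symmetric difference $d=|E(T')\setminus E(T)|$, with the key step being the choice of $e_i$ as a cut-crossing edge of the fundamental cycle of $e_j$ in $T'$, and the inequality $|e_i|\ge|e_j|$ extracted from the cut (cycle-exchange) property of the MST $T$. This is precisely the graphic-matroid instantiation of the exchange axiom the paper gestures at, so in spirit the two arguments coincide; the difference is that yours is self-contained and verifiable line by line, whereas the paper's citation-based treatment buys brevity and a level of generality (the matroid formulation transfers to other norms and settings, which is relevant to the reference it draws on). All the delicate points in your write-up check out: $e_j$ is indeed the unique $T$-edge across the cut it induces, so your chosen $e_i$ lies in $E(T')\setminus E(T)$ and the swap strictly decreases $d$; removing an edge of the unique cycle restores a spanning tree; and minimality of $T$ is invoked exactly once, to rule out a strictly shorter crossing edge, giving the non-strict inequality the lemma asks for.
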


\begin{corollary}\label{MST-coroll}If $T^{\,\prime}$ is a $k$-bottleneck Steiner minimum tree on $X$ with Steiner points $S$ then every minimum
spanning tree on $X\cup S$ is a $k$-bottleneck Steiner minimum tree on $X$.
\end{corollary}
\begin{proof}
Let $T$ be a minimum spanning tree on $X\cup S$. By Lemma~\ref{MSTswap}, we can transform $T^{\,\prime}$ to $T$ by a series of edge swaps, each
of which replaces an edge with another of the same length or shorter.
\end{proof}

{It is well-known in the literature that for any set of planar nodes an Euclidean MST exists with highest degree (i.e., the number of edges meeting at a node) at most $5$. We therefore have the following result.}

\begin{corollary}\label{deg5MST}For any set of terminals there exists a $k$-MBST that is an MST on its complete set of nodes and such that the maximum node degree is at most $5$.
\end{corollary}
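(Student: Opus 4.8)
The plan is to combine the two facts that immediately precede the statement: Corollary~\ref{MST-coroll}, which guarantees that any MST built on $X\cup S$ (for a $k$-MBST's Steiner set $S$) is again a $k$-MBST on $X$, and the stated well-known fact that every planar point set admits a Euclidean MST of maximum degree at most $5$. The corollary then follows by simply feeding the low-degree MST into Corollary~\ref{MST-coroll}.

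First I would fix an arbitrary $k$-MBST $T^{\,\prime}$ on $X$, with Steiner point set $S$ satisfying $\vert S\vert\le k$; such a tree exists by the definition of the $k$-BST problem. I would then treat $X\cup S$ as a single finite planar point set and apply the well-known degree bound to obtain a Euclidean MST $T$ on $X\cup S$ whose maximum node degree is at most $5$.

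It remains only to verify that this $T$ is itself a $k$-MBST on $X$, and this is exactly the content of Corollary~\ref{MST-coroll}: since $T^{\,\prime}$ is a $k$-MBST on $X$ with Steiner points $S$, every MST on $X\cup S$ — in particular the degree-$\le 5$ tree $T$ — is a $k$-MBST on $X$. By construction $T$ uses no more than $k$ Steiner points (precisely those in $S$), is an MST on its complete node set $X\cup S$, and has maximum degree at most $5$, which is the assertion.

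All the real difficulty lies outside this two-line bookkeeping, in the degree-$5$ bound for Euclidean MSTs that the excerpt invokes as well known. Were one to establish it from scratch, the main obstacle would be the angle argument: a vertex of degree $\ge 6$ forces two incident MST edges to subtend an angle of at most $60^\circ$, whence the longer of the two can be swapped for the edge joining their endpoints without increasing total length, contradicting minimality; disposing of the degenerate exactly-$60^\circ$ configuration is precisely what sharpens the bound from $6$ to $5$. Since we are entitled to assume this result, no such argument is needed, and the proof reduces to the composition of the two cited facts above.
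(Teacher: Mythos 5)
Your proof is correct and is essentially the paper's own argument: the paper gives no explicit proof, but derives the corollary exactly as you do, by taking an optimal $k$-MBST with Steiner set $S$, invoking the well-known existence of a degree-$\le 5$ Euclidean MST on $X\cup S$, and applying Corollary~\ref{MST-coroll} to conclude that this MST is again a $k$-MBST. Your closing remarks on how one would prove the degree-$5$ bound are a reasonable aside but, as you note, lie outside what the statement requires.
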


{As recognized in \cite{bae}, the \textit{smallest spanning circle} concept is fundamental to the $k$-BST problem. A smallest spanning circle for a given set of nodes is a circle of smallest radius that includes all nodes on or within its boundary. The centre of a smallest spanning circle minimises the largest distance to a node of the given set. Therefore we may assume that every Steiner point in a $k$-MBST is located at the centre of the smallest spanning disk of its neighbours (for if not then we could relocate the Steiner point to this location without increasing the length of the bottleneck); see Figure \ref{figchebyCent}.}

The above properties now provide us with a simple (albeit time expensive) exact solution to the $1$-BST problem: we select the
cheapest tree after iterating through all subsets of at most five neighbours for the Steiner point and, for each iteration, locating the Steiner point at the centre of the smallest spanning disk on the subset and then constructing an MST on all nodes. The total complexity of this algorithm is therefore $\mathcal{O}(n^6\log n)$. Fortunately the $1$-BST can be solved in $O(n\log n)$ time by using the algorithm of Bae et al. \cite{bae}.

\section{Algorithms}\label{section2}
Before providing the details of our main algorithm we briefly describe the algorithms MSTH and $3$RHH. These algorithms will be compared to ours via simulation in Section \ref{section3}.

\subsection{The Minimum Spanning Tree Heuristic}
The only deterministic algorithm in the literature for approximating a $k$-MBST is the minimum spanning tree heuristic (MSTH) of Wang and Du
\cite{wang}. The authors prove that the performance ratio of MSTH lies inclusively between $\sqrt{3}$ and $2$, and they conjecture that, if
$P\neq NP$, $\sqrt{3}$ is the best possible performance ratio of any polynomial-time algorithm for the Euclidean $k$-BST problem. The time-complexity of MSTH is $O(n\log n)$. In MSTH all Steiner points are of degree two and are referred to as \textit{beads}. Let $T$ be an MST on any $q$ nodes. We define \textit{beading} an
edge $e_i$ of $T$ with $n_i$ beads as placing $n_i$ equally spaced beads along $e_i$. Our main
algorithm makes extensive use of MSTH as a ``look-ahead" component, which we denote by BEAD. Subroutine BEAD is virtually identical to MSTH,
except that in MSTH an MST is not part of the input (only the locations of the terminals are). For
each $e_i$ in $T$ let $l(e_i)=\displaystyle\frac{\vert e_i\vert}{n_i+1}$.\\

\noindent\textbf{Algorithm 1} Subroutine BEAD (MSTH)\\
\noindent\textbf{Input}: An MST $T$ on $q$ nodes and a positive integer $j$\\
\noindent\textbf{Output}: A beaded MST $T^{\,\prime}$\\
\textbf{1}: Clear all beads of $T$, i.e., set $n_i=0$ for any $i$\\
\textbf{2}: $e_1,...,e_{q-1}$ be the edges of $T$\\
\textbf{3}: Compute $l(e_i)$ for each $e_i$\\
\textbf{4}: Sort the edges in non-decreasing order of $l(\cdot)$\\
\textbf{5}: Add a bead to $e_i$ of largest $l(\cdot)$ value\\
\textbf{6}: Update $l(e_i)$\\
\textbf{7}: Relocate the beads on $e_i$ so that they are equally spaced\\
\textbf{8}: Reset $e_i$'s position in the ordering\\
\textbf{9}: Repeat Steps 5-8 until $j$ beads have been added\\

It is demonstrated in \cite{wang} that Algorithm 1 is the best possible solution when all Steiner points are of degree two.

\subsection{The $3$-Regular Hypergraph Heuristic}
In this section we discuss the only other heuristic for the $k$-BST problem, namely the randomized algorithm of Du et al. \cite{du1}. The core of their algorithm is based on a randomized approximation scheme for finding minimum spanning trees in weighted $3$-uniform hypergraphs discovered by Promel and Steger \cite{prom}. This MST component of $3$RHH, say MST$3$H, is the least efficient part of Du et al.'s algorithm, and we will need to discuss certain technical aspects of MST$3$H in order to demonstrate the superiority of our algorithm over $3$RHH.

As input to MST$3$H, a weighted $3$-regular hypergraph containing $O(n^3)$ edges and with edge weights at most $k$ is constructed by $3$RHH. The edge-weights are then randomized by MST$3$H to a size of $O(mnk)$, where $m$ is the number of edges of the hypergraph. A matrix $A$ is then constructed containing entries of order $O(2^w)$, where $w$ is some edge-weight. The determinant of $A$ is calculated and is of size $O((2^w)^n)$. The determinant is therefore of order $2^{O(kn^5)}$. To find an MST in the given hypergraph the largest $w$ such that $2^{2w}$ divides the determinant must be calculated. Due to the extremely large size of the determinant this takes very long in practice.

As an example we implemented an instance with $n=10$ and $k=3$. Then $3$-RHH produced a hypergraph with a determinant containing $496,631$ digits, which made it practically intractable to find the relevant divisor as stipulated by the algorithm. The size of the determinant in this example is typical for instances with $n=10$.

\subsection{A Naive Iterative 1-Bottleneck Steiner Tree Heuristic}
A naive version of the iterative 1-BST heuristic (or I1-BSTH) on a set $X$ of terminals might proceed as follows. An optimal $1$-BST on $X$ is constructed by Bae et al.'s algorithm. This sequence of routines is repeated $k-1$ times, i.e.,
until $k$ new points have been added.

In order to compare the performance of this heuristic against MSTH we implemented both in C (with minor modifications as described below in
Subroutine CONVERT), and tested them on uniformly distributed terminal sets. Roughly speaking, simulations show a maximum average improvement of
approximately $4\%$ in maximum edge length for naive I1-BSTH over MSTH. This average can be increased to about $5\%$ by combining naive I1-BSTH and MSTH into a
meta-heuristic, which shows that MSTH can outperform naive I1-BSTH on some terminal sets. A simple example where this occurs is described next,
and is illustrated in Figures \ref{fig4} and \ref{fig2}; in these figures (and throughout) the solid circles are terminals and the open circles
are Steiner points.

\begin{example}\label{eg1}Let $k=2$ and consider four terminals with coordinates $\mathbf{t_1}=(2.0, 9.1),\ \mathbf{t_2}=(3.0, 8.6),\
\mathbf{t_3}=(4.6, 3.1),\ \mathbf{t_4}=(8.6, 9.2)$. Then $p_1=2.85$ and $p_2=3.63$ represent the longest edges in the solutions of
MSTH and naive I1-BSTH respectively.
\end{example}

Fig. \ref{fig2} also illustrates one of the modifications made to naive I1-BSTH, namely that of correcting any bent edges that may appear in the
final solution; a bent edge actually consists of two edges incident, at an angle of less than $180^\circ$, to a Steiner point of degree two.

\subsection{The Pre-beaded Iterative $1$-Bottleneck Steiner Tree Heuristic}
Fig. \ref{fig2} highlights a major draw-back of iterative greedy heuristics: an optimal solution at some iteration may undercut the gains of
subsequent iterations. We notice that, bent-edge correction aside, the tree from Fig. \ref{fig2c} provides no decrease in maximum edge-length
over Fig. \ref{fig2a}. This shortcoming permeates the class of solutions constructed by naive I1-BSTH. A look-ahead routine is therefore
warranted as a modification to 1I-BSTH, but only if gains in performance are made by such a routine over the afore-mentioned meta-heuristic. The
pre-beaded iterative $1$-BST heuristic (pre-beaded I1-BSTH), described in Algorithm 2, utilises subroutine BEAD for this
purpose.\\

\noindent\textbf{Algorithm 2} Pre-beaded I1-BSTH\\
\noindent\textbf{Input}: A set $X$ of $n$ points in the plane, and a positive integer $k$\\
\noindent\textbf{Output}: An approximate $k$-MBST $T$ interconnecting $X$ and $k$ new points\\
\textbf{1}: Construct a minimum spanning tree $T_0$ on $X$\\
\textbf{2}: Let $p=c=0$\\
\textbf{3}: \textbf{while} $c < k$ \textbf{and} $p < k$\\
\textbf{4}:$\hspace{0.25in}$ Run BEAD with input $T_c$ and $k-1-p$, and output $T_c^{\,\prime}$\\
\textbf{5}:$\hspace{0.425in}$ Run Bae's algorithm for $1$-BST on $T_c^{\,\prime}$ with output $T^{\,\prime}$ and $\mathbf{s}$\\
\textbf{6}:$\hspace{0.425in}$ Run CONVERT with input $T^{\,\prime}$ and $\mathbf{s}$, and output $T^{\,\prime\prime}$\\
\textbf{7}:$\hspace{0.425in}$ Let $p^{\,\prime}$ be the number of non-bead Steiner points of $T^{\,\prime\prime}$\\
\textbf{8}:$\hspace{0.425in}$ Run BEAD with input $T^{\,\prime\prime}$ and $k-p^{\,\prime}$, and output $T^*$\\
\textbf{9}:$\hspace{0.175in}$ \textbf{end for}\\
\textbf{10}:$\hspace{0.175in}$ Increment $c$ by $1$, let $T_c$ be the cheapest $T^*$ produced\\
\textbf{11}:$\hspace{0.175in}$ Let $p$ be the number of non-bead Steiner points of $T_c$\\
\textbf{12}: \textbf{end while}\\

Pre-beaded I1-BSTH also contains the procedure CONVERT, which consists exactly of the modifications that were made to naive I1-BSTH for its
implementation in C.\\

\noindent\textbf{Algorithm 3} Subroutine CONVERT\\
\noindent\textbf{Input}: A tree $T$ interconnecting a set of terminals $X$ and a set of Steiner points $S$, and an element $\mathbf{s}\in S$\\
\noindent\textbf{Output}: The modified tree $T^{\,\prime}$\\
\textbf{1}: Delete all degree-one Steiner points of $T$\\
\textbf{2}: Correct all bent edges and relocate beads to be equally spaced along edges\\
\textbf{3}: Relocate $\mathbf{s}$ at the Chebyshev point of its neighbours, and add the edges incident to $\mathbf{s}$\\

\section{Analysis of Pre-beaded I1-BSTH}\label{section3a}
There are three aspects of pre-beaded I1-BSTH that we examine more closely in this section. The first is the look-ahead component
actualised by the call to BEAD in Line 12 of Algorithm 2. Calculating an optimal sequence of Steiner point additions can generally not be done
greedily, and calculating every potential sequence would take us into exponential complexity. However, we can remain within polynomial
complexity if we assume, at any given iteration of the algorithm, that the remaining sequence of additions will consist of beads only. This
assumption can be viewed as providing a lower bound for the overall (including past and future iterations) performance of an optimal Steiner
point addition sequence. This lower bound is clearly equal to the performance of MSTH, and hence we are guaranteed that pre-beaded I1-BSTH never
performs worse than MSTH. This result on its own is not satisfactory, since the meta heuristic combining the naive version of I1-BSTH and MSTH
has the same property. But as will be demonstrated in the next section, algorithm pre-beaded I1-BSTH performs significantly better than the meta
heuristic.

We use Fig. \ref{figEg1} to illustrate the benefit of the look-ahead component: we compare the solutions constructed by naive I1-BSTH and
pre-beaded I1-BSTH on a set of six terminals $\mathbf{t_1}=(968.4, 506.4)$, $\mathbf{t_2}=(3.9, 86.8)$, $\mathbf{t_3}=(188.8, 7.5)$,
$\mathbf{t_4}=(779.2, 675.9)$, $\mathbf{t_5}=(238.1, 644.4)$, $\mathbf{t_6}=(620.6, 2.4)$, and where $k=2$. The three longest edges of the MST on these
terminals are (in descending order of length) $e_1,e_2,e_3$; see Fig. \ref{figEga}.

The naive algorithm, Fig. \ref{figEgb}, selects $\{\mathbf{t_1},\mathbf{t_4},\mathbf{t_5},\mathbf{t_6}\}$ as the optimal neighbour set for the
first Steiner point, $\mathbf{s}_1$. The Chebyshev centre of this set is defined by the subset $\{\mathbf{t_1},\mathbf{t_5},\mathbf{t_6}\}$, and
the subsequent tree, after updating the MST to include $\mathbf{s_1}$, has $e_3$ as its longest edge. The second iteration's optimal point
happens to be a bead subdividing $e_3$, and the longest edge of the final tree is therefore $e_4=\mathbf{s_1t_6}$ (equivalently
$\mathbf{s_1t_5}$ or $\mathbf{s_1t_1}$) and has a length of 389.87 units. Fig. \ref{figEgc} shows the final tree produced by pre-beaded
I1-BSTH. Here one may view the algorithm as initially adding a single bead to each of edges $e_1$ and $e_2$; it then optimally repositions the
bead on $e_1$ to create the Steiner point $\mathbf{s_1^\prime}$ with neighbour-set $\{\mathbf{t_3},\mathbf{t_5},\mathbf{t_6}\}$. Finally it
looks for an improved position for the bead on $e_2$, but determines that that bead is already an optimal Steiner point with respect to the
other nodes. The resulting longest edge is $e_5$, with $\vert e_5\vert =374.46$. Note also that naive I1-BSTH greedily deletes the two longest
edges, $e_1,e_2$, in its first iteration, resulting in a final tree with relatively long edges incident to the first Steiner point. Pre-beaded
I1-BSTH, however, deletes edges $e_1$ and $e_3$ in the first iteration because it assumes that $e_2$ will at least be beaded in the second
iteration. The result is a Steiner point $\mathbf{s_1^\prime}$ with relatively shorter incident edges. This example is simple, but typical of
many cases where the pre-beaded algorithm outperforms the naive version.

The second aspect we would like to examine concerns the call to BEAD in Line 4. This call ensures that there are $k-1$ Steiner points in the
input tree before any main iteration. Consequently our algorithm may be viewed as beading the entire initial MST, and then one-by-one removing
beads and adding them back optimally. It is possible to construct a version of I1-BSTH that does not utilise this BEAD call but still contains
the look-ahead routine. This so called \textit{Post-beaded} I1-BSTH  performs very similarly to our pre-beaded algorithm, but falls short in certain
instances, of which Fig. \ref{figEg2} is a case in point.

Before the first iteration of pre-beaded I1-BSTH the MST in Fig. \ref{figEg2a} already
contains a bead on the longest edge $\mathbf{t_1t_4}$. As opposed to the post-beaded version, our algorithm is able to utilise this bead as a
potential neighbour for the Steiner point $\mathbf{s_1}$, and the result is the simultaneous introduction of two Steiner points during the first
iteration.

\section{Empirical Results}\label{section3}
Multiple $k$-BST problem instances, \textit{uniformly distributed} (i.e., randomly with a uniform distribution) in a 10,000 by 10,000 point region, were generated and solved in MATLAB for MSTH, post-beaded I1-BSTH, and
pre-beaded I1-BSTH. We focussed on terminal sets of size $n=600$ and $k=20,40,...,200$ and performed $300$ simulations for each case. Tables \ref{newtable1}--\ref{newtable3}, and Figures \ref{figNew1} and \ref{figNew3} present the simulation results. We also performed $300$ simulations for instances with terminals distributed non-uniformly in a 10,000 by 10,000 point region, with results depicted in Figure \ref{figNewD} (once again we focussed on $n=600$ and $k=20,40,...,200$). In particular, for this last group of simulations we selected a random point to represent a base station and then distributed the points with an increasing density towards the base station. The run times for our algorithm are not as important as algorithms which are dynamic in nature (such as for instance routing protocols). However, it is not unreasonable to envision sensor network applications arising in the future that have many more terminals than just a few thousand; in other words we must still show that our algorithms scale well. We have therefore also included run time results in our simulations. In Table \ref{newtable4} we present average run time results for $3$RHH on instances of up to $6$ terminals and $3$ Steiner points.

\begin{table}[h!t!]
\caption{MSTH\label{newtable1}}
\begin{center}
\begin{tabular}{l|l|l|l}
\hline \hline
\textbf{$\hspace{0.4cm}$ $k$} & $\hspace{0.3cm}$\textbf{Avg. lifetime (s) $\alpha=2$} & $\hspace{0.25cm}$\textbf{Avg. lifetime (s) $\alpha=4$} & $\hspace{0.2cm}$\textbf{Avg. run time (s)} \\
\hline
$20$ & 19.25 & 0.037 & 1.05\\
$40$ & 20.12 & 0.040 & 1.12\\
$60$ & 21.37 & 0.046 & 1.28\\
$80$ & 23.74 & 0.056 & 1.29\\
$100$ & 24.65 & 0.061 & 1.33\\
$120$ & 25.68 & 0.066 & 1.44\\
$140$ & 26.64 & 0.071 & 1.47\\
$160$ & 28.50 & 0.081 & 1.51\\
$180$ & 30.95 & 0.096 & 1.58\\
$200$ & 31.89 & 0.102 & 1.68\\
\hline \hline
\end{tabular}
\end{center}
\end{table}

\begin{table}[h!t!]
\caption{Post-beaded I1-BSTH\label{newtable2}}
\begin{center}
\begin{tabular}{l|l|l|l}
\hline \hline
\textbf{$\hspace{0.4cm}$ $k$} & $\hspace{0.3cm}$\textbf{Avg. lifetime (s) $\alpha=2$} & $\hspace{0.25cm}$\textbf{Avg. lifetime (s) $\alpha=4$} & $\hspace{0.2cm}$\textbf{Avg. run time (s)} \\
\hline
$20$ & 20.53 & 0.042 & 1.88\\
$40$ & 22.19 & 0.049 & 2.31\\
$60$ & 24.31 & 0.059 & 2.97\\
$80$ & 27.11 & 0.073 & 3.25\\
$100$ & 28.58 & 0.082 & 3.88\\
$120$ & 30.44 & 0.092 & 4.41\\
$140$ & 31.06 & 0.096 & 4.89\\
$160$ & 35.66 & 0.127 & 5.39\\
$180$ & 38.63 & 0.149 & 5.81\\
$200$ & 40.55 & 0.164 & 6.33\\
\hline \hline
\end{tabular}
\end{center}
\end{table}

\begin{table}[h!t!]
\caption{Pre-beaded I1-BSTH\label{newtable3}}
\begin{center}
\begin{tabular}{l|l|l|l}
\hline \hline
\textbf{$\hspace{0.4cm}$ $k$} & $\hspace{0.3cm}$\textbf{Avg. lifetime (s) $\alpha=2$} & $\hspace{0.25cm}$\textbf{Avg. lifetime (s) $\alpha=4$} & $\hspace{0.2cm}$\textbf{Avg. run time (s)} \\
\hline
$20$ & 21.81 & 0.048 & 1.98\\
$40$ & 21.56 & 0.047 & 2.22\\
$60$ & 26.34 & 0.069 & 3.02\\
$80$ & 28.20 & 0.080 & 3.20\\
$100$ & 29.08 & 0.085 & 3.89\\
$120$ & 30.88 & 0.095 & 4.49\\
$140$ & 32.77 & 0.107 & 4.77\\
$160$ & 35.37 & 0.125 & 5.29\\
$180$ & 40.24 & 0.162 & 6.12\\
$200$ & 44.35 & 0.197 & 6.56\\
\hline \hline
\end{tabular}
\end{center}
\end{table}

\begin{table}[h!t!]
\caption{Run times for $3$RHH\label{newtable4}}
\begin{center}
\begin{tabular}{l|l|l}
\hline \hline
\textbf{$\hspace{0.4cm}$ $n$} & $\hspace{0.3cm}$ $k$ & $\hspace{0.2cm}$\textbf{Avg. run time (s)} \\
\hline
$3$ & 2 & 23.52\\
$4$ & 2 & 180.41\\
$5$ & 3 & 7653.23\\
$6$ & 3 & 28845.49\\

\hline \hline
\end{tabular}
\end{center}
\end{table}

In our simulations we observed that the performance of pre-beaded I1-BSTH relative to MSTH roughly increases from $k=0$, reaching a peak in the approximate range $0.2 \leq \frac{k}{n}\leq 0.34$, whereafter performance decreases at a decreasing rate. We also observed near-asymptotic behaviour as $k$ increases beyond $k=n$ (see Figure \ref{figNew4}, where $\ell^k_{\mathrm{BSTH}}$ and $\ell^k_{\mathrm{MSTH}}$ are defined below). The limiting behaviour might not continue indefinitely, but we would like to state the following lemma and corollary.

For any set $X$ of $n$ terminals let $\ell_{\mathrm{OPT}}^k(X)$, $\ell_{\mathrm{MSTH}}^k(X)$, and $\ell_{\mathrm{BSTH}}^k(X)$ represent the
length of the longest edge in three potential solutions to the $k$-BST problem on $X$; namely the optimal solution, the solution generated by
MSTH, and the solution generated by pre-beaded I1-BSTH respectively. Let $L_{\mathrm{SMT}}(X)$ be the total length of the classical \textit{Steiner minimal tree}
(SMT) on $X$ (i.e., the shortest total length tree spanning $X$ where any number of Steiner points may be introduced), and $L_{\mathrm{MST}}(X)$ be the total length of the MST on $X$. We let
$\ell(X,\mathrm{type1},\mathrm{type2})=\displaystyle\lim_{k\rightarrow\infty}\left(\ell_{\mathrm{type1}}^k(X)/\ell_{\mathrm{type2}}^k(X)\right)$,
where $\mathrm{type1},\mathrm{type2}\in\{\mathrm{OPT},\mathrm{MSTH},\mathrm{BSTH}\}$.

\begin{lemma}$\ell(X,\mathrm{MSTH},\mathrm{OPT})=L_{\mathrm{MST}}(X)/L_{\mathrm{SMT}}(X)$.
\end{lemma}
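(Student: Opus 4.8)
\section*{Proof proposal}

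The plan is to show that both quantities $\ell_{\mathrm{MSTH}}^k(X)$ and $\ell_{\mathrm{OPT}}^k(X)$ decay like a constant divided by $k$, with those constants being $L_{\mathrm{MST}}(X)$ and $L_{\mathrm{SMT}}(X)$ respectively; the claimed ratio then falls out by dividing the two limits. Concretely, I would establish the two asymptotic identities $\lim_{k\to\infty} k\,\ell_{\mathrm{MSTH}}^k(X) = L_{\mathrm{MST}}(X)$ and $\lim_{k\to\infty} k\,\ell_{\mathrm{OPT}}^k(X) = L_{\mathrm{SMT}}(X)$, each via a squeeze between an averaging lower bound and a constructive upper bound, and then write $\ell(X,\mathrm{MSTH},\mathrm{OPT}) = \lim_{k\to\infty}\left(k\,\ell_{\mathrm{MSTH}}^k(X)\right)\big/\left(k\,\ell_{\mathrm{OPT}}^k(X)\right)$.

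For the MSTH identity I would argue as follows. Since beading preserves total length, the output of MSTH is a tree of total length $L_{\mathrm{MST}}(X)$ with exactly $k+n-1$ edges, and its longest edge is at least the average edge length, giving $\ell_{\mathrm{MSTH}}^k(X) \ge L_{\mathrm{MST}}(X)/(k+n-1)$. For the matching upper bound I would exhibit a concrete equalizing beading: aiming for a common segment length $\delta$, edge $e_i$ needs at most $\lceil |e_i|/\delta\rceil-1$ beads, and because $\lceil x\rceil < x+1$ the total bead demand is strictly below $L_{\mathrm{MST}}(X)/\delta$; taking $\delta = L_{\mathrm{MST}}(X)/k$ keeps this within the budget of $k$ beads, and since MSTH is the optimal beading of the MST (see \cite{wang}) we get $\ell_{\mathrm{MSTH}}^k(X) \le L_{\mathrm{MST}}(X)/k$. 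Multiplying by $k$ and letting $k\to\infty$ squeezes $k\,\ell_{\mathrm{MSTH}}^k(X)$ to $L_{\mathrm{MST}}(X)$.

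The OPT identity runs in parallel but uses the Steiner minimal tree. The lower bound is the crux: any feasible tree spans $X$ using at most $k$ Steiner points, hence has at most $n+k-1$ edges, while its total length is at least $L_{\mathrm{SMT}}(X)$ by the definition of the SMT; the same averaging estimate then yields $\ell_{\mathrm{OPT}}^k(X) \ge L_{\mathrm{SMT}}(X)/(k+n-1)$. For the upper bound I would produce an explicit feasible solution: take the SMT, which has some fixed number $s\le n-2$ of Steiner points, and bead its $n+s-1$ edges using the remaining $k-s$ points (valid once $k$ is large). The identical rounding estimate gives a bottleneck at most $L_{\mathrm{SMT}}(X)/(k-s)$, and since $s$ is a constant independent of $k$, multiplying by $k$ and passing to the limit squeezes $k\,\ell_{\mathrm{OPT}}^k(X)$ to $L_{\mathrm{SMT}}(X)$.

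The main obstacle is the lower bound for OPT, which must rule out the possibility that an optimal bottleneck tree is cheaper per edge than the SMT; this is exactly where the global optimality of the Steiner minimal tree (its total length dominates that of every tree spanning $X$, with any number of Steiner points) is indispensable, together with the elementary observation that the edge count cannot exceed $n+k-1$. The only other technical care needed is verifying that the integer rounding in the beading step adds merely an $O(1)$ term to the segment count, which is negligible after multiplying by $1/k$ and taking $k\to\infty$; I would also flag that the boundedness of $s$ independent of $k$ is precisely what makes the OPT upper bound $L_{\mathrm{SMT}}(X)/(k-s)$ converge to the intended constant.
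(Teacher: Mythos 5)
Your proof is correct and shares the paper's overall skeleton: both establish the two limits $\lim_{k\to\infty}k\,\ell_{\mathrm{MSTH}}^k(X)=L_{\mathrm{MST}}(X)$ and $\lim_{k\to\infty}k\,\ell_{\mathrm{OPT}}^k(X)=L_{\mathrm{SMT}}(X)$ and then divide. Where you genuinely differ is in how these limits are justified. The paper's argument is structural and quite informal: for MSTH it appeals to equidistribution of edge lengths in an optimal beading; for OPT it observes that at most $n-2$ Steiner points in an optimal bottleneck tree can have degree greater than two, so almost all Steiner points are beads, and from this it asserts that $k\,\ell_{\mathrm{OPT}}^k(X)$ tends to $L_{\mathrm{SMT}}(X)$ --- the half of that claim saying the optimum is asymptotically no worse than a suitably beaded SMT is never actually checked. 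Your two-sided squeeze avoids analyzing the structure of optimal solutions altogether: the lower bounds follow from averaging (the bottleneck is at least total length divided by edge count, and the edge count is at most $n+k-1$), combined in the OPT case with the defining minimality of the SMT; the upper bounds follow from explicit feasible beadings (of the MST for MSTH, of the SMT itself for OPT), with the ceiling estimate controlling the bead budget. This buys rigor and self-containedness: in particular your constructive bound $\ell_{\mathrm{OPT}}^k(X)\leq L_{\mathrm{SMT}}(X)/(k-s)$ supplies exactly the step the paper leaves implicit. The outside ingredients you invoke are standard and legitimate: the classical bound $s\leq n-2$ on the number of SMT Steiner points, and Wang and Du's result (quoted in the paper) that MSTH computes an optimal beading, which you correctly use so that your equalizing beading upper-bounds $\ell_{\mathrm{MSTH}}^k(X)$. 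The only cosmetic caveat is that the final division of limits tacitly requires $L_{\mathrm{SMT}}(X)>0$, i.e., at least two distinct terminals.
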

\begin{proof}As $k\rightarrow \infty$, the length of every edge in the optimal beading of the MST on $X$ approaches $\ell_{\mathrm{MSTH}}^k(X)$,
and hence $k.\ell_{\mathrm{MSTH}}^k(X)\rightarrow L_{\mathrm{MST}}(X)$. Similarly, as $k\rightarrow \infty$ in the optimal bottleneck tree, at
most $n-2$ Steiner points have degree greater than $2$, and hence almost all Steiner points are beads. So again $k.\ell_{\mathrm{OPT}}^k(X)$
approaches the total length of the minimum length tree interconnecting $X$, namely the SMT on $X$.
\end{proof}

Let $\mathrm{E}(\cdot)$ be the expected value of some parameter determined by $X$, where $X$ is uniformly distributed, and let $\mathrm{E}_0
=\mathrm{E}\left(L_{\mathrm{MST}}(X)/L_{\mathrm{SMT}}(X)\right)$. Gilbert and Pollack \cite{gilbert} provide an upper-bound of $1.064$ for
$\mathrm{E}_0$.

\begin{corollary}If $\mathrm{E}\left(\ell(X,\mathrm{BSTH},\mathrm{MSTH})\right)=p$ then\\
$\mathrm{E}\left(\ell(X,\mathrm{BSTH},\mathrm{OPT})\right)=p.\mathrm{E}_0$
\end{corollary}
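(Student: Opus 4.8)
The plan is to exploit the multiplicative (telescoping) structure of the ratio $\ell(X,\cdot,\cdot)$ and then reduce everything to the preceding Lemma. First I would observe that, for any fixed $X$ for which the relevant limits exist, the $k$-indexed ratios factor exactly:
$$\frac{\ell_{\mathrm{BSTH}}^k(X)}{\ell_{\mathrm{OPT}}^k(X)}=\frac{\ell_{\mathrm{BSTH}}^k(X)}{\ell_{\mathrm{MSTH}}^k(X)}\cdot\frac{\ell_{\mathrm{MSTH}}^k(X)}{\ell_{\mathrm{OPT}}^k(X)}.$$
Since each factor on the right converges as $k\rightarrow\infty$ (the first to $\ell(X,\mathrm{BSTH},\mathrm{MSTH})$, the second to $\ell(X,\mathrm{MSTH},\mathrm{OPT})$), the limit of the product equals the product of the limits, yielding the pointwise identity
$$\ell(X,\mathrm{BSTH},\mathrm{OPT})=\ell(X,\mathrm{BSTH},\mathrm{MSTH})\cdot\ell(X,\mathrm{MSTH},\mathrm{OPT}).$$

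Next I would invoke the Lemma, which identifies the second factor as the purely geometric quantity $\ell(X,\mathrm{MSTH},\mathrm{OPT})=L_{\mathrm{MST}}(X)/L_{\mathrm{SMT}}(X)$. Substituting and taking expectations over the uniformly distributed $X$ gives
$$\mathrm{E}\!\left(\ell(X,\mathrm{BSTH},\mathrm{OPT})\right)=\mathrm{E}\!\left(\ell(X,\mathrm{BSTH},\mathrm{MSTH})\cdot\frac{L_{\mathrm{MST}}(X)}{L_{\mathrm{SMT}}(X)}\right),$$
and the target is to show the right-hand side equals $p\cdot\mathrm{E}_0$, where $p=\mathrm{E}(\ell(X,\mathrm{BSTH},\mathrm{MSTH}))$ and $\mathrm{E}_0=\mathrm{E}(L_{\mathrm{MST}}(X)/L_{\mathrm{SMT}}(X))$ by definition.

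The hard part will be justifying the factorisation of this expectation, i.e. that $\mathrm{E}(AB)=\mathrm{E}(A)\,\mathrm{E}(B)$ for $A=\ell(X,\mathrm{BSTH},\mathrm{MSTH})$ and $B=L_{\mathrm{MST}}(X)/L_{\mathrm{SMT}}(X)$; this holds precisely when $A$ and $B$ are uncorrelated, for which independence (or one factor being essentially constant) is sufficient. I would argue the decoupling on the grounds that the asymptotic improvement ratio $A$ of the iterative heuristic over pure beading is governed by the \emph{local} edge-length structure of the beaded MST, whereas $B$ is a \emph{global} Steiner-ratio statistic of the terminal geometry, so for uniform $X$ the two should vary independently. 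A cleaner route, which I would prefer if the estimates are available, is a concentration argument: if both $A(X)$ and $B(X)$ concentrate about their means as $n\rightarrow\infty$ for uniform $X$, then $\mathrm{E}(AB)\rightarrow\mathrm{E}(A)\,\mathrm{E}(B)$ and the stated identity follows immediately. Without such a hypothesis one can only obtain the corresponding inequality via a correlation bound, so pinning down the precise stochastic relationship between $A$ and $B$ is the genuine crux of the argument.
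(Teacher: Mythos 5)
Your proposal follows essentially the same route as the paper: the identical telescoping factorisation $\ell(X,\mathrm{BSTH},\mathrm{OPT})=\ell(X,\mathrm{BSTH},\mathrm{MSTH})\cdot\ell(X,\mathrm{MSTH},\mathrm{OPT})$, then the Lemma to identify the second factor, then splitting the expectation of the product into a product of expectations. The step you flag as the genuine crux --- justifying $\mathrm{E}(AB)=\mathrm{E}(A)\,\mathrm{E}(B)$, which indeed requires independence, uncorrelatedness, or a concentration argument --- is exactly the step the paper performs silently with no justification whatsoever, so your treatment is, if anything, more careful than the published proof.
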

\begin{proof}\\
$\mathrm{E}\left(\ell(X,\mathrm{BSTH},\mathrm{OPT})\right)$\\
$=\mathrm{E}\left(\ell(X,\mathrm{MSTH},\mathrm{OPT})\cdot\ell(X, \mathrm{BSTH},\mathrm{MSTH})\right)$\\
$=\mathrm{E}\left(\ell(X,\mathrm{MSTH},\mathrm{OPT})\right)\cdot\mathrm{E}\left(\ell(X, \mathrm{BSTH},\mathrm{MSTH})\right)$\\
$=\mathrm{E}\left(L_{\mathrm{MST}}(X)/L_{\mathrm{SMT}}(X)\right)\cdot p$.
\end{proof}
$\ $\\

{Figure \ref{figNew4} suggests that $\mathrm{E}\left(\ell(X, \mathrm{BSTH},\mathrm{MSTH})\right)$ is approximately $0.985$. Based on this value, and by using the above corollary, we can estimate the performance ratio of I1-BSTH on uniformly distributed sets as $k$ tends to infinity: $\mathrm{E}\left(\ell(X,\mathrm{BSTH},\mathrm{OPT})\right)=p.\mathrm{E}_0\approx 0.985\times \mathrm{E}_0\leq 1.04804$, where the final inequality comes from the above mentioned upper bound on $\mathrm{E}_0$ given in \cite{gilbert}.}

\section{Conclusion}\label{section4}
One can model the problem of the optimal lifetime extension of homogeneous WSNs as the Euclidean $k$-bottleneck Steiner tree problem. In this
paper we propose a new approximation algorithm for the $k$-BST problem, namely the pre-beaded iterative $1$-BST heuristic, and test its
performance relative to the minimum spanning tree heuristic (MSTH) of Wang and Du, which is currently the best performing heuristic for
the problem. By incorporating MSTH as a ``look-ahead" component and utilising Bae et al.'s algorithm for solving the $1$-bottleneck Steiner tree problem, we develop an algorithm that significantly outperforms MSTH for uniformly distributed terminal sets. We also demonstrate that the probabilistic $3$-regular hypergraph heuristic ($3$RHH) of Du et al. is impractical even for small $n$. To our knowledge this is the
first well-performing tractable approach to solving the $k$-BST problem.

\section{Acknowledgement}
The authors would like to acknowledge the Australian Research Council for financially supporting this research through a Discovery grant.

\clearpage
\begin{figure}[htb]
    \begin{center}
        \subfigure[A minimum spanning tree on six given nodes (sensors)]{\includegraphics[scale=0.3]{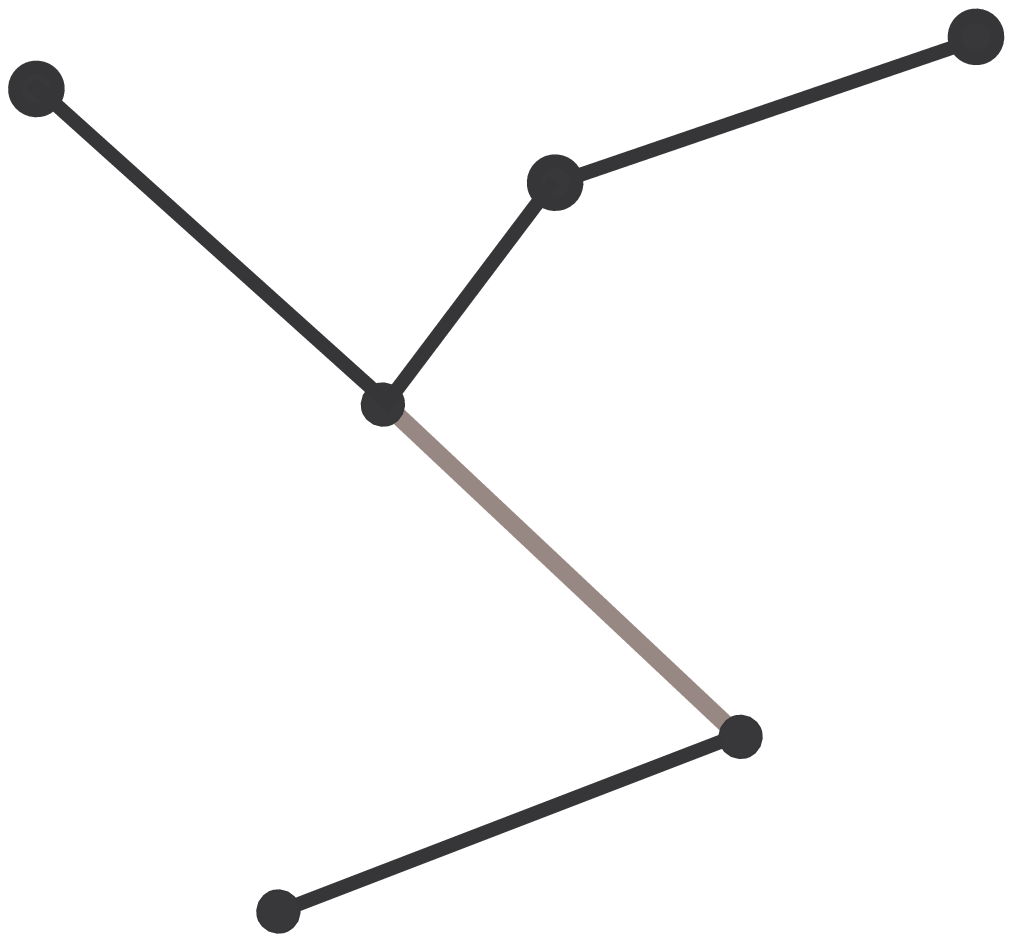}}
        \subfigure[A spanning tree on the given nodes which includes two Steiner points (relays)]{\includegraphics[scale=0.3]{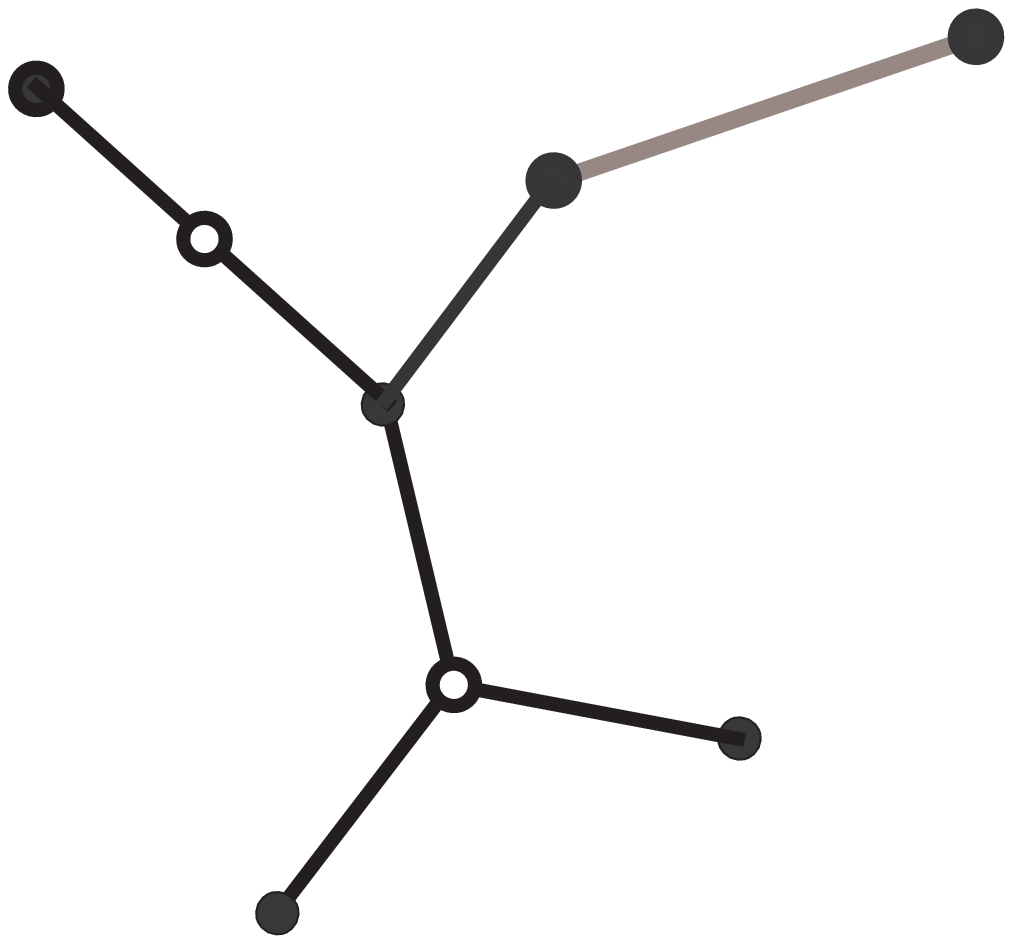}}
    \end{center}
    \caption{Two potential solutions to an instance of the $2$-bottleneck Steiner tree problem \label{figBotDef}}
\end{figure}

\clearpage
\begin{figure}[htb]
    \begin{center}
        \subfigure[]{\includegraphics[scale=0.3]{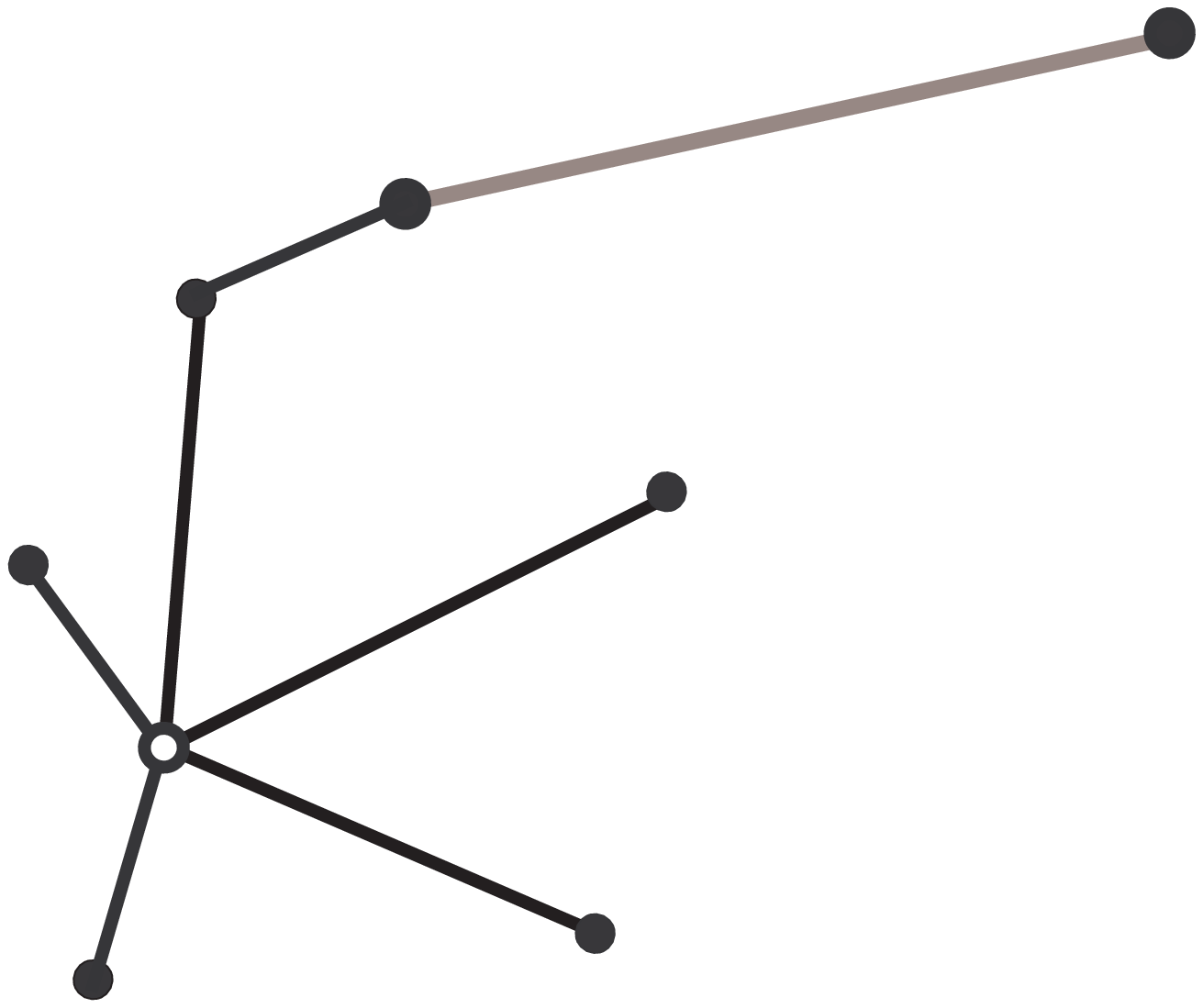}}
        \subfigure[]{\includegraphics[scale=0.3]{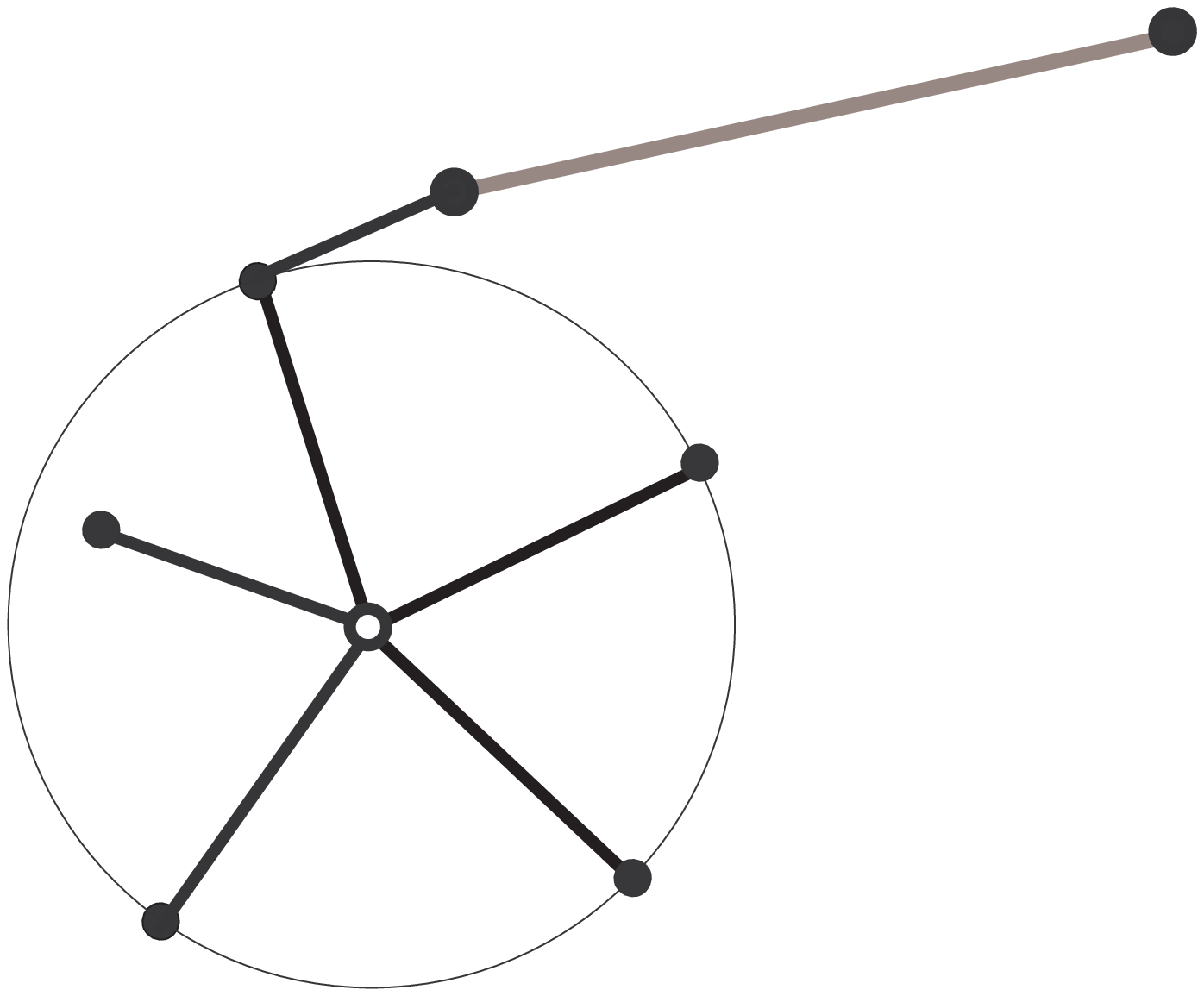}}
    \end{center}
    \caption{Steiner points may be relocated, without increasing the bottleneck, so that they lie at the centre of the smallest spanning circle of their neighbours \label{figchebyCent}}
\end{figure}

\clearpage
\begin{figure}[htb]
    \begin{center}
        \subfigure[Four given terminals plus their MST]{\includegraphics[scale=0.3]{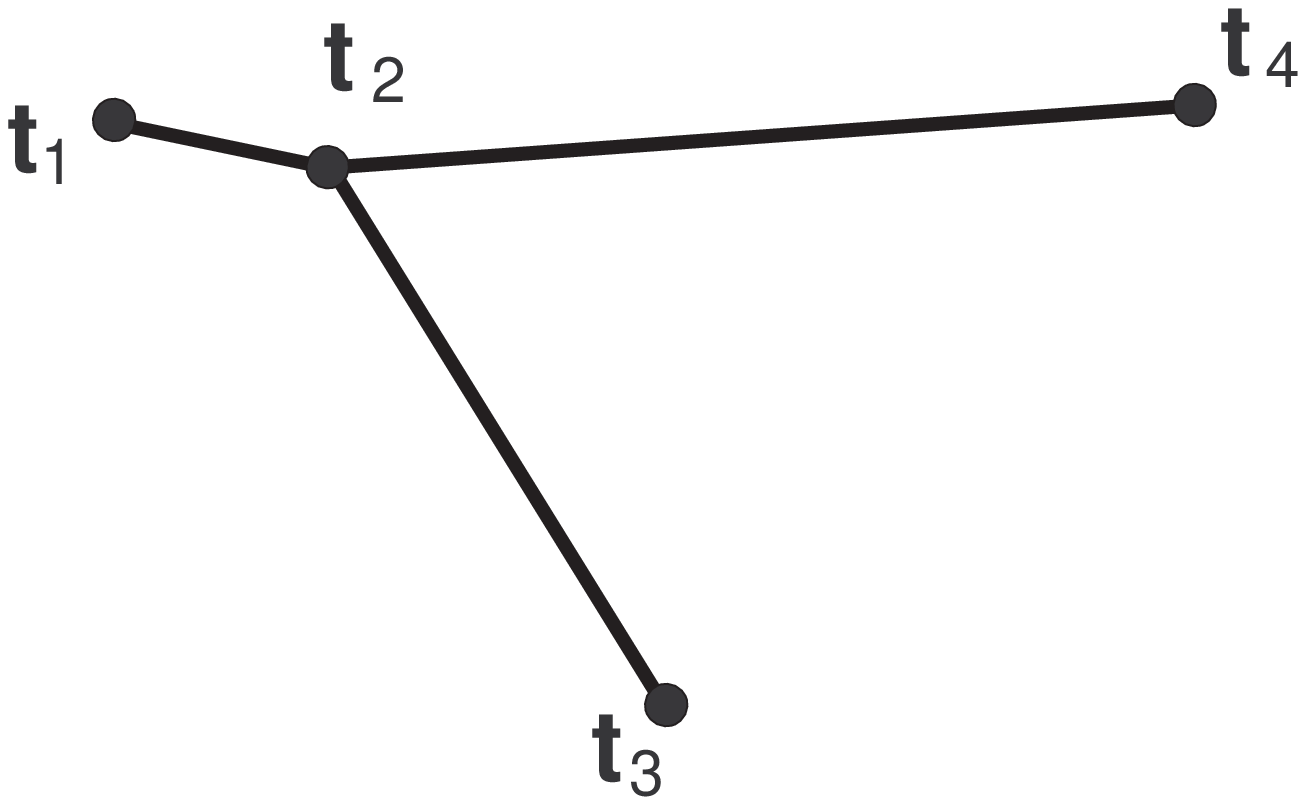}}
        \subfigure[Longest edges are beaded]{\includegraphics[scale=0.3]{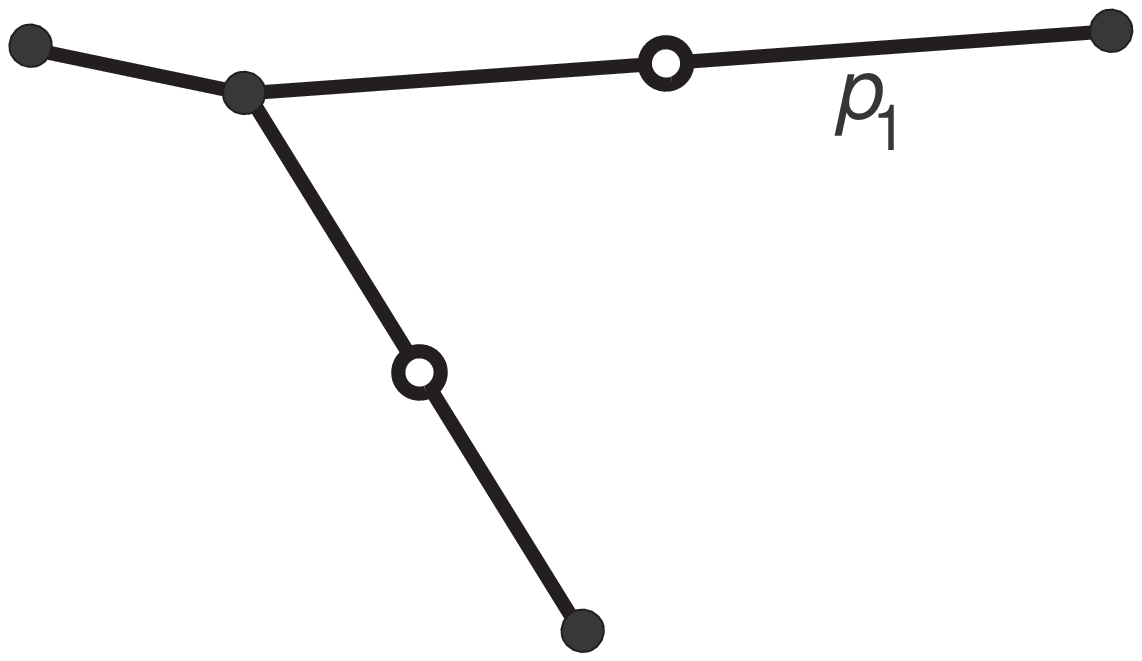}}
    \end{center}
    \caption{The solution constructed by MSTH as described in Example \ref{eg1}
    \label{fig4}}
\end{figure}

\clearpage
\begin{figure}[htb]
  \begin{center}
    \subfigure[First Steiner point is added]{\label{fig2a}\includegraphics[scale=0.3]{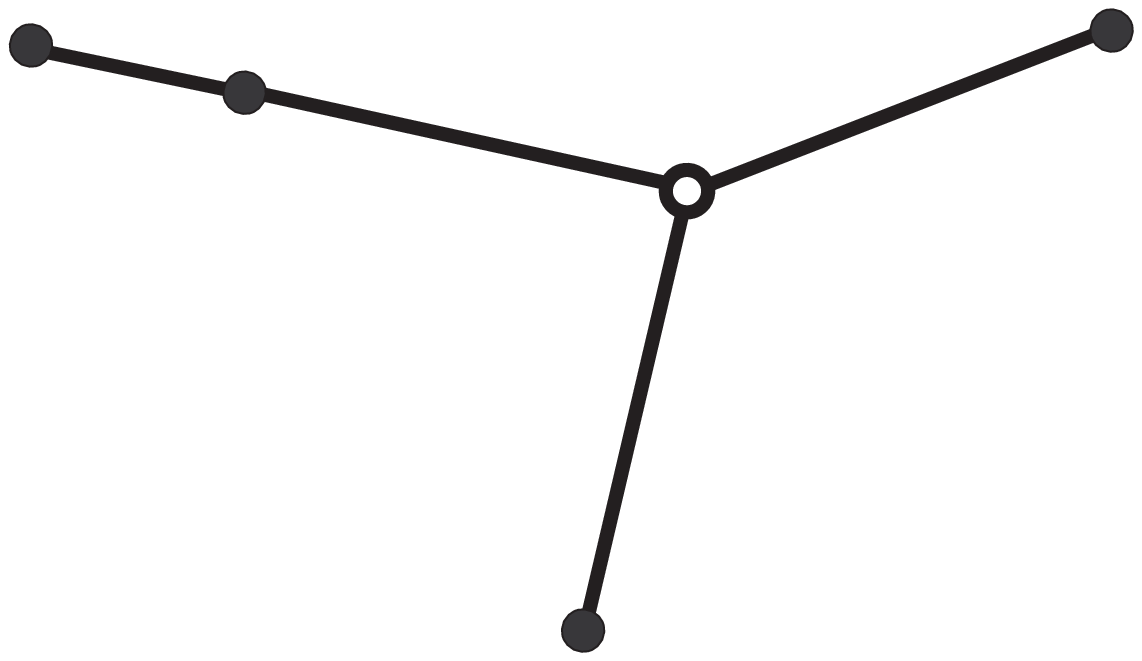}}
    \subfigure[Second Steiner point is added]{\includegraphics[scale=0.3]{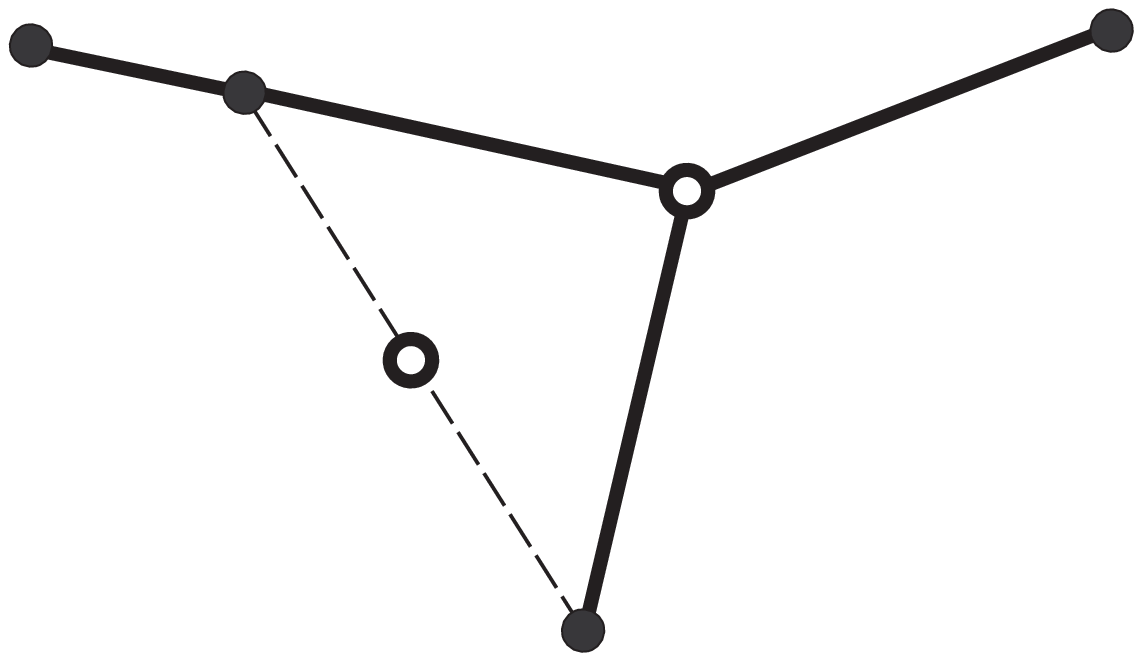}}
    \subfigure[Tree is updated]{\label{fig2c}\includegraphics[scale=0.3]{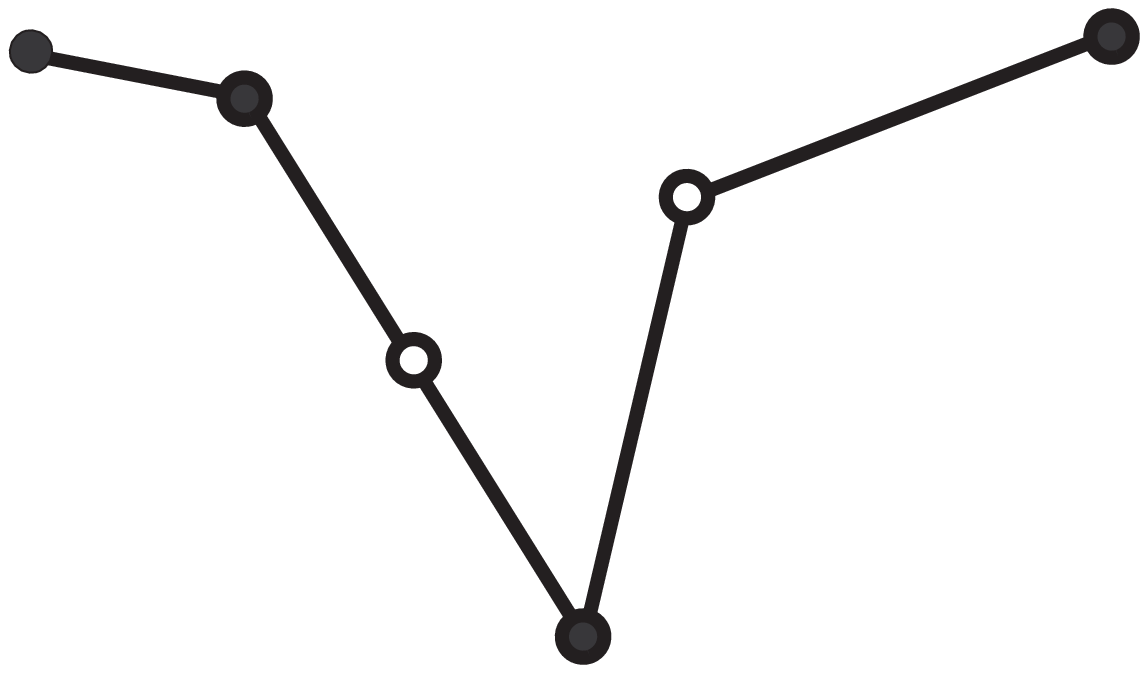}}
    \subfigure[Bent edge is corrected]{\includegraphics[scale=0.3]{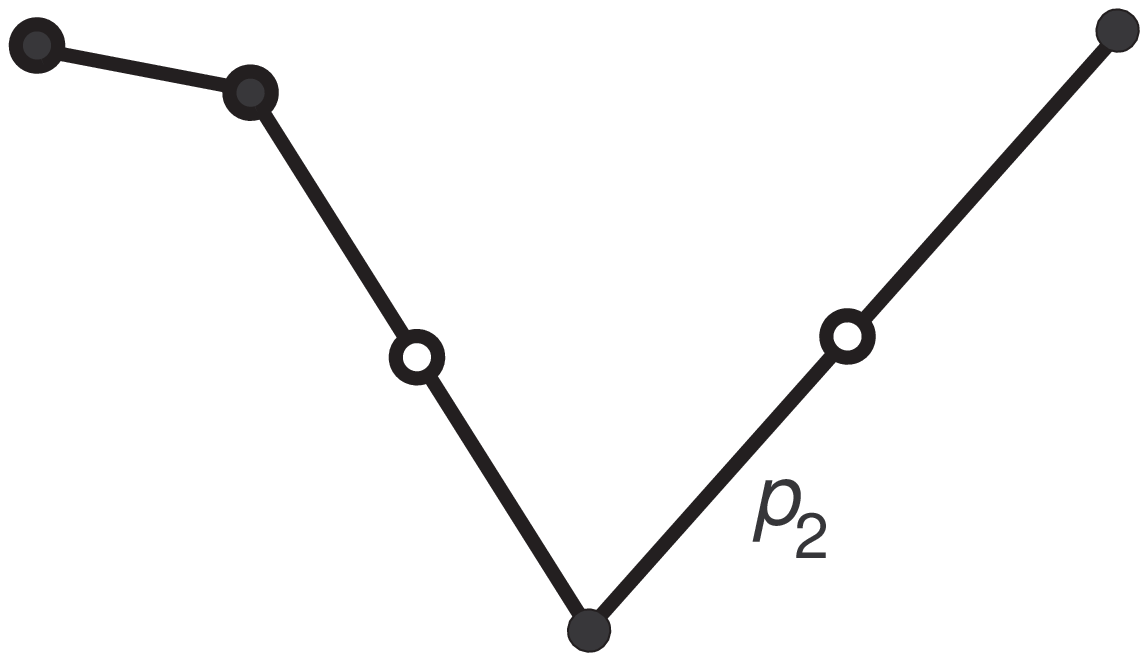}}
  \end{center}
  \caption{The solution constructed by naive I1-BSTH as described in Example \ref{eg1}}
  \label{fig2}
\end{figure}

\clearpage
\begin{figure}[htb]
  \begin{center}
    \subfigure[MST]{\label{figEga}\includegraphics[scale=0.21]{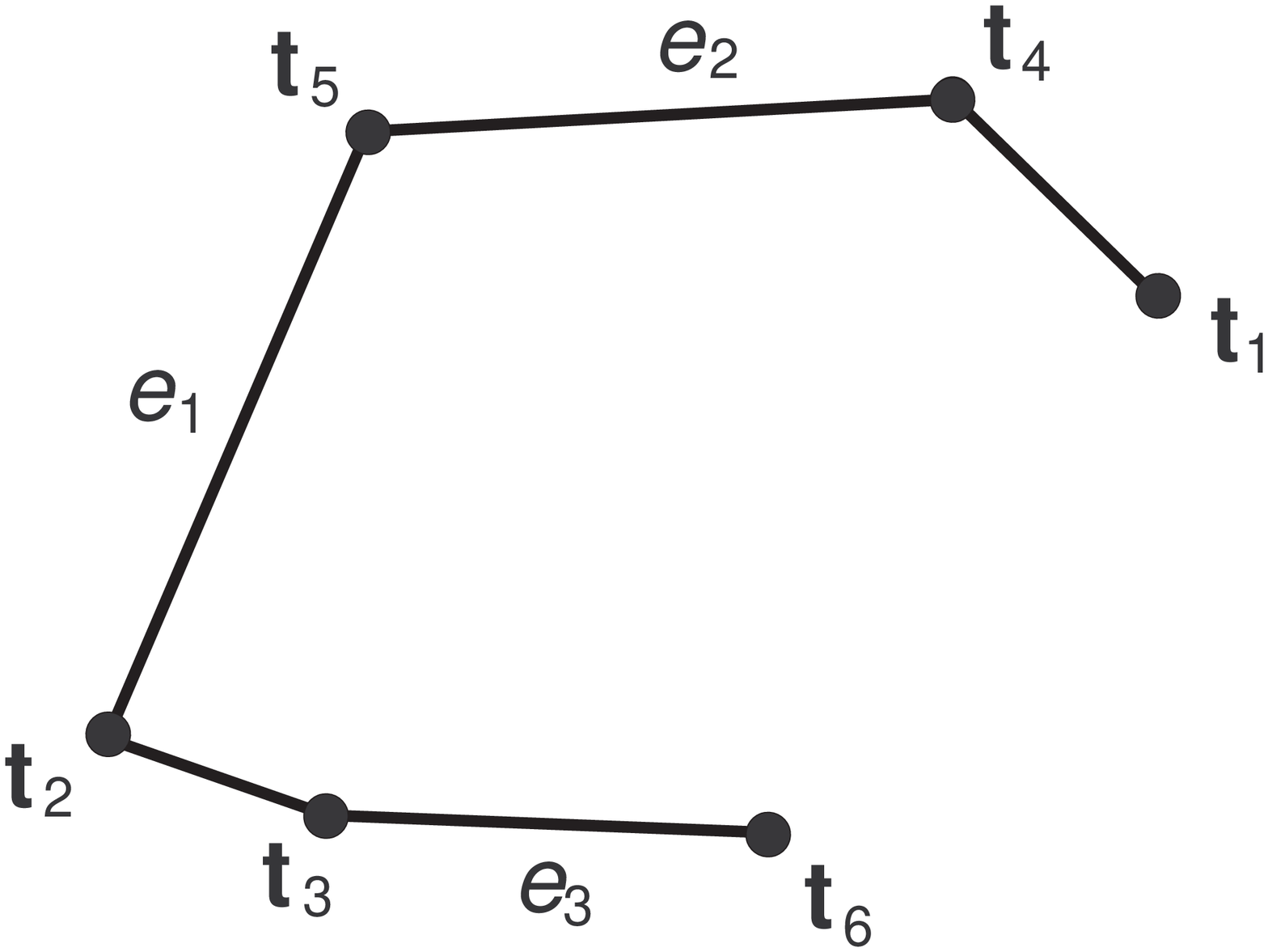}}
    \subfigure[Naive I1-BSTH solution]{\label{figEgb}\includegraphics[scale=0.21]{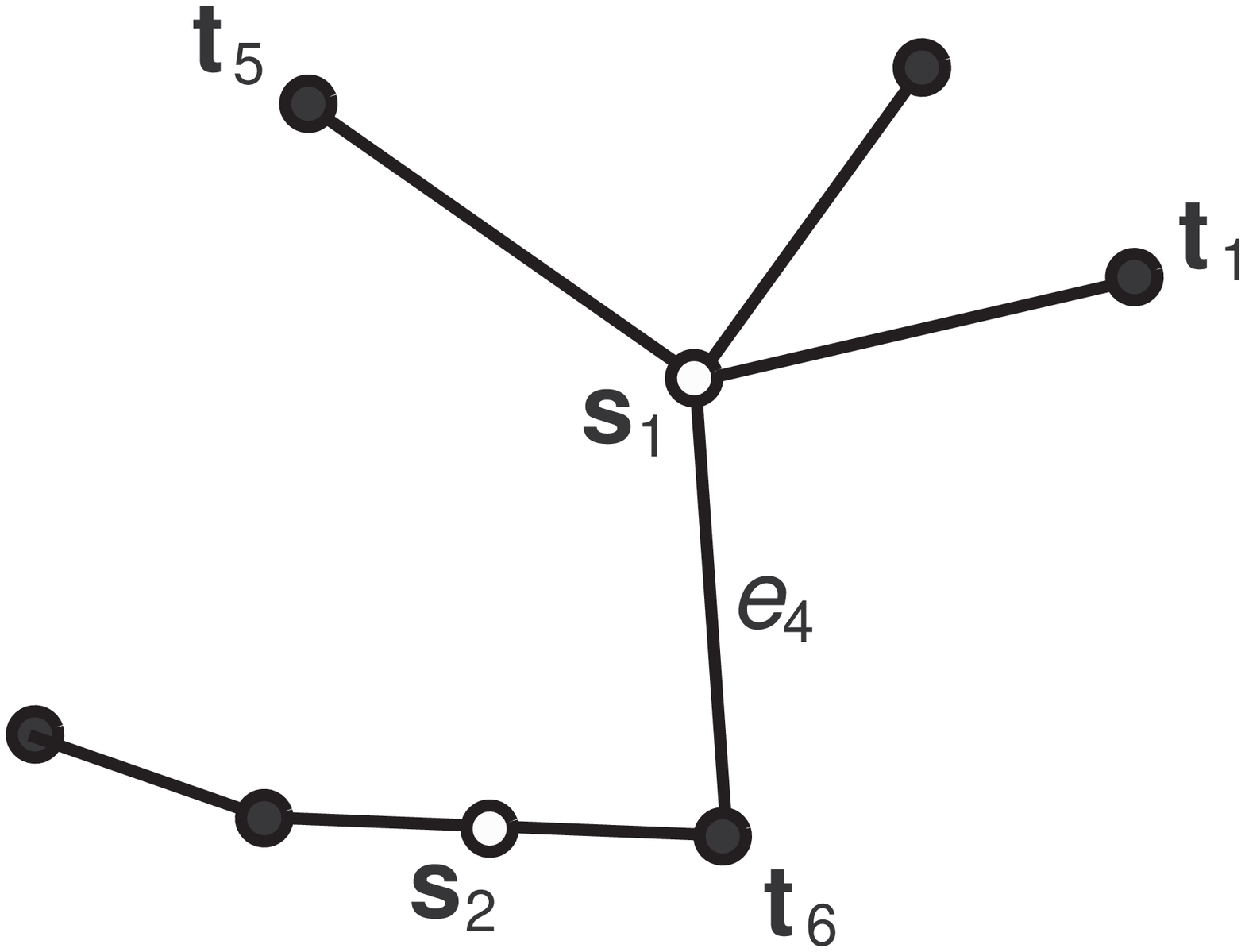}}
    \subfigure[The pre-beaded I1-BSTH solution]{\label{figEgc}\includegraphics[scale=0.21]{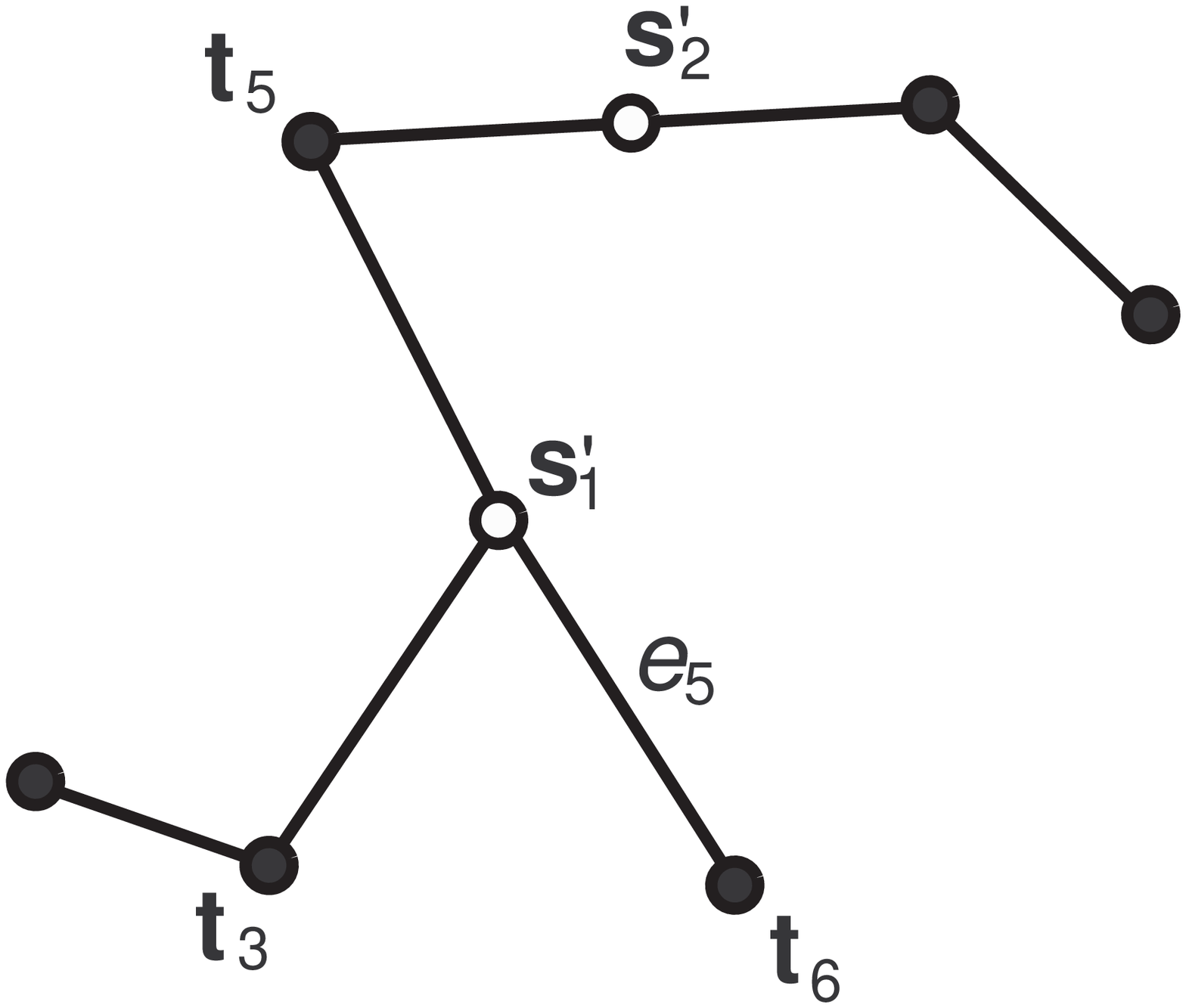}}
    \vspace{5pt}
  \end{center}
  \caption{The benefit of looking ahead}
  \label{figEg1}
\end{figure}

\clearpage
\begin{figure}[htb]
  \begin{center}
    \subfigure[MST]{\label{figEg2a}\includegraphics[scale=0.21]{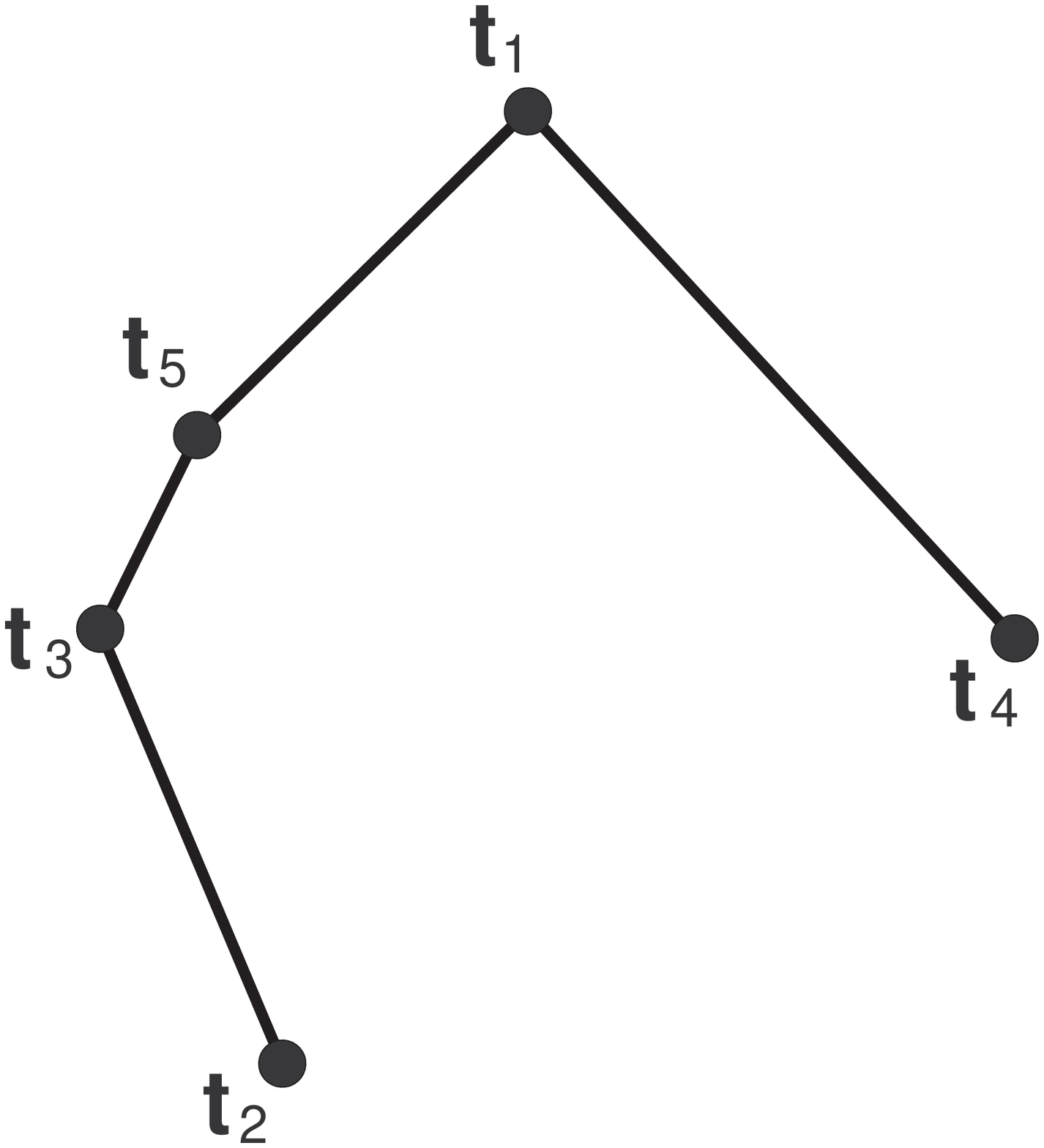}}
    \subfigure[Pre-beaded I1-BSTH solution]{\label{figEg2b}\includegraphics[scale=0.21]{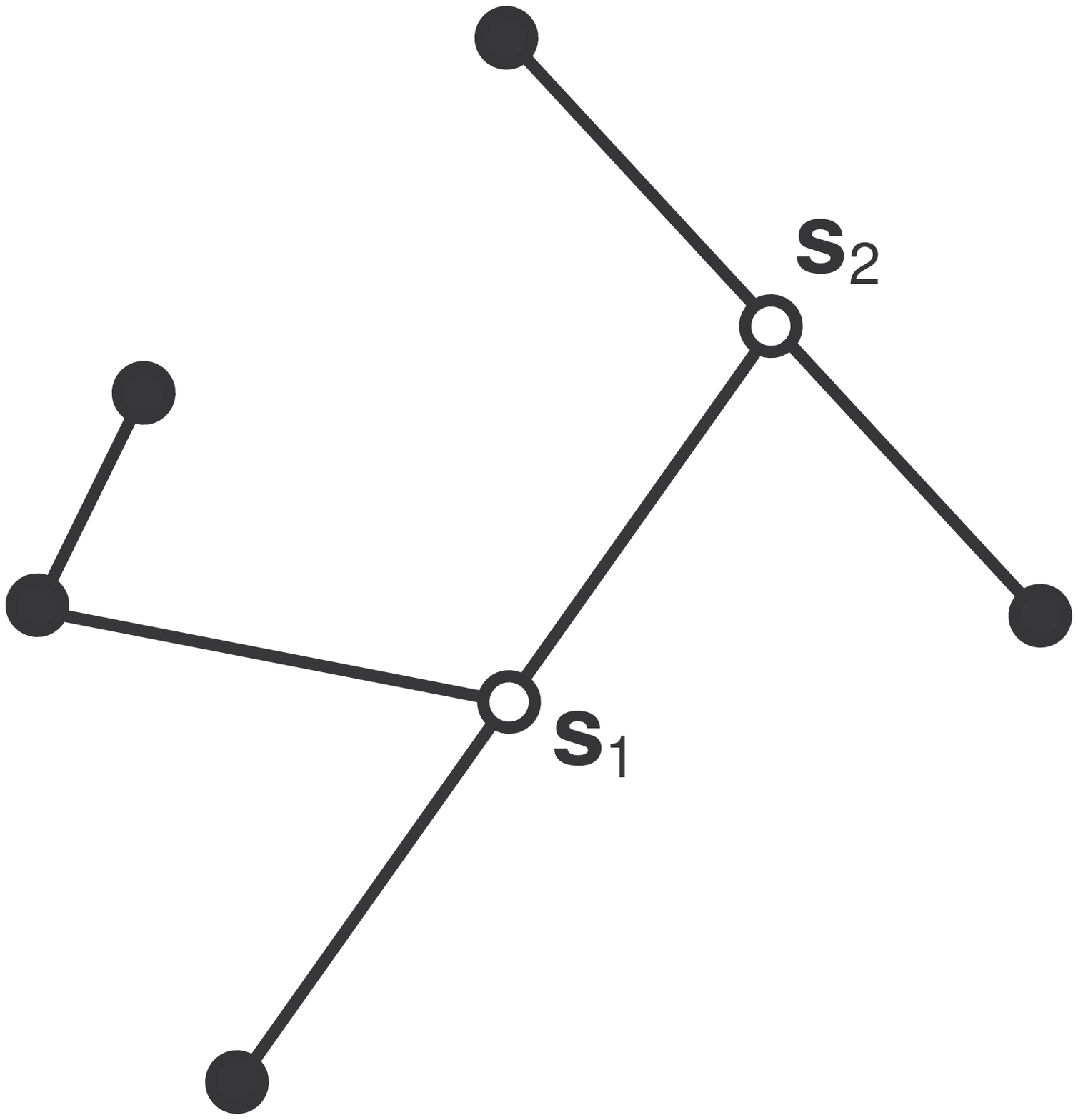}}
  \end{center}
  \caption{The benefit of pre-beading}
  \label{figEg2}
\end{figure}

\clearpage
\begin{figure}[htb]
    \begin{center}
    \includegraphics[scale=0.4]{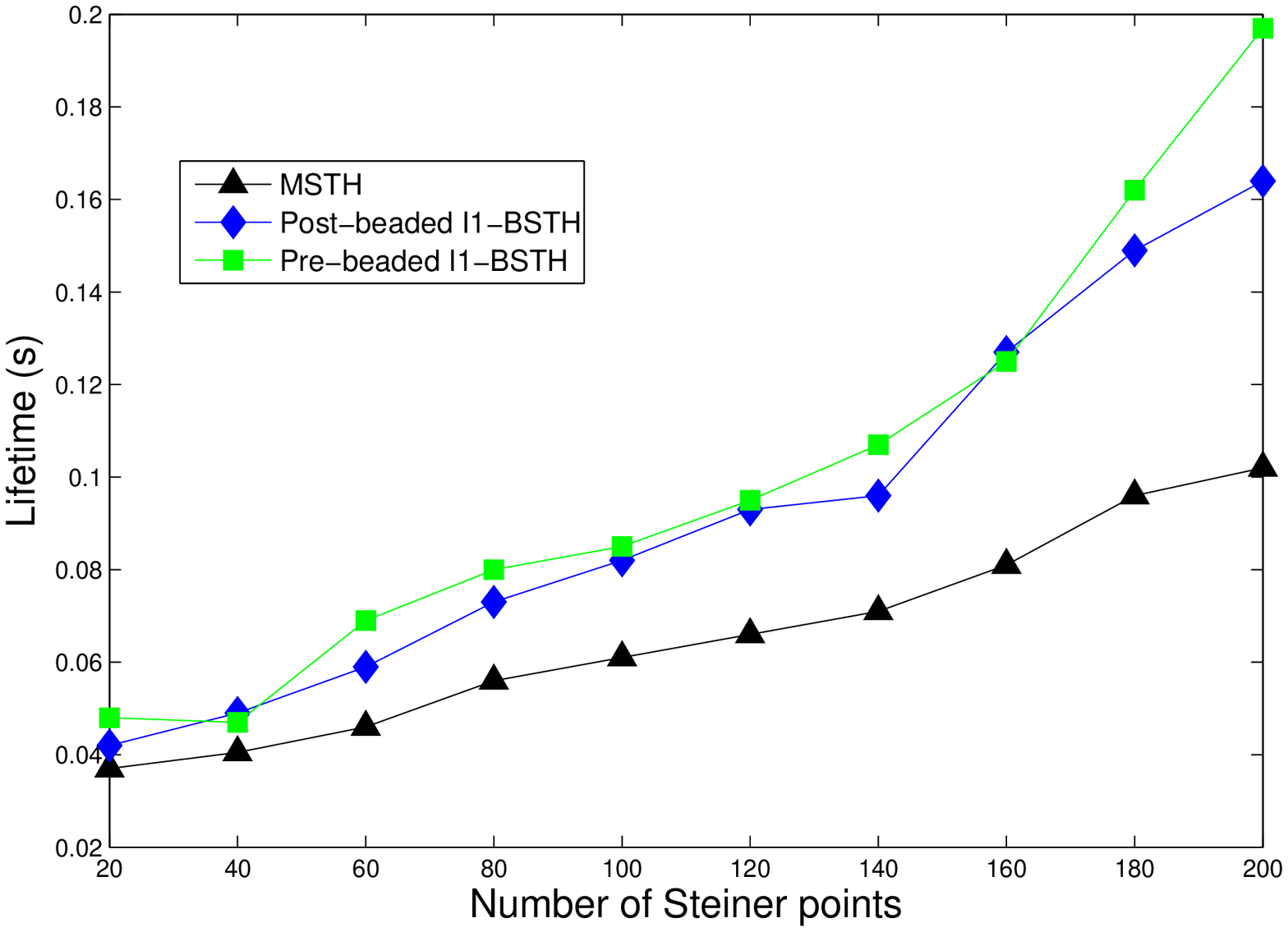}
    \end{center}
    \caption{Comparison of the lifetime of networks constructed by Pre-beaded I1-BSTH, Post-beaded I1-BSTH, and MSTH for a uniform distribution when $\alpha=4$}
    \label{figNew1}
\end{figure}

\clearpage
\begin{figure}[htb]
    \begin{center}
    \includegraphics[scale=0.4]{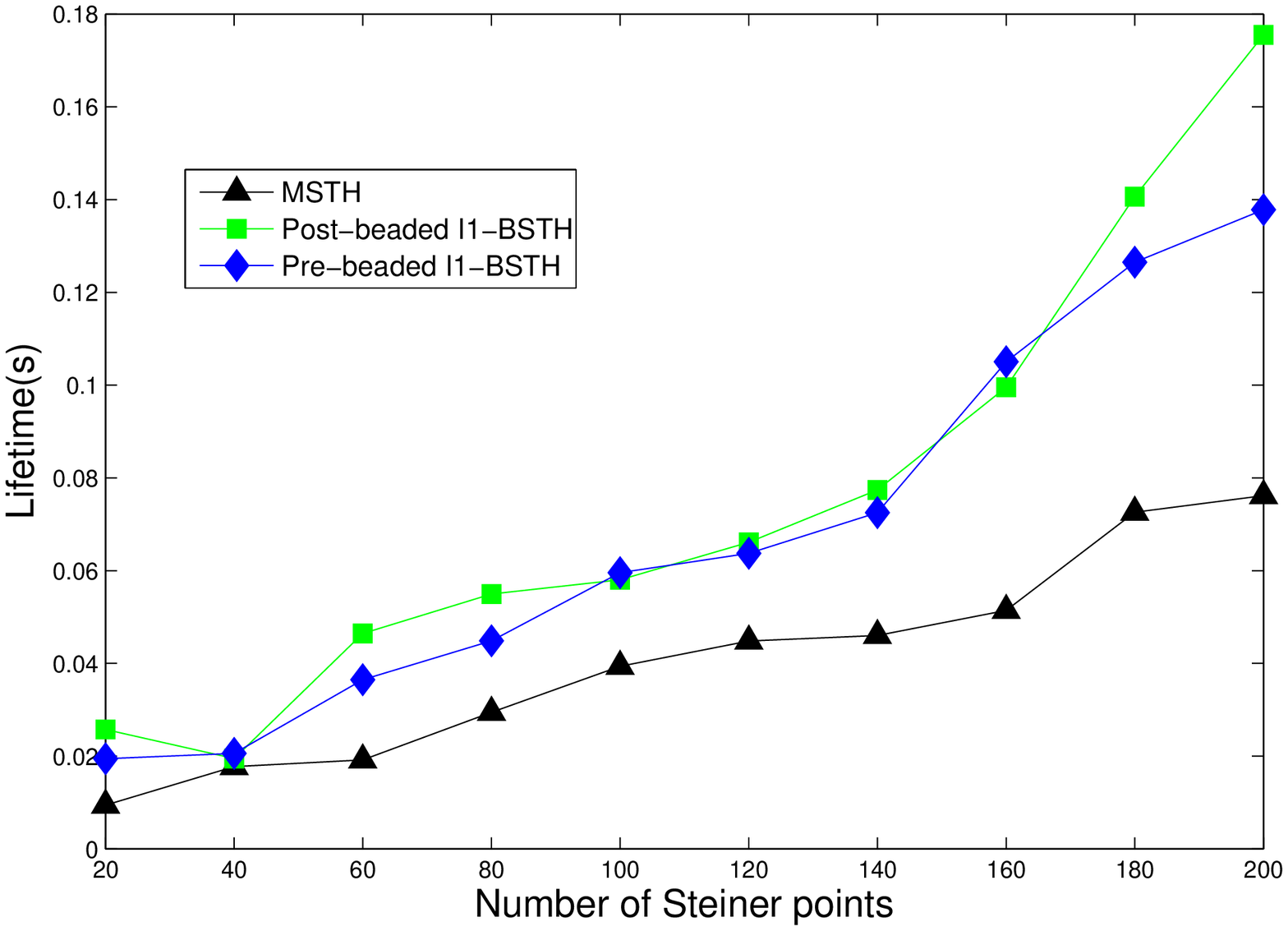}
    \end{center}
    \caption{Comparison of the lifetime of networks constructed by Pre-beaded I1-BSTH, Post-beaded I1-BSTH, and MSTH for a non-uniform distribution when $\alpha=4$}
    \label{figNewD}
\end{figure}

\clearpage
\begin{figure}[htb]
    \begin{center}
    \includegraphics[scale=0.4]{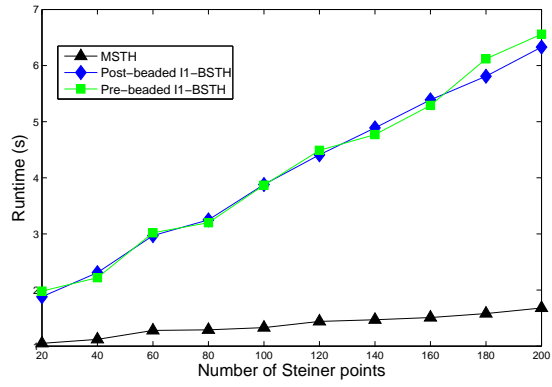}
    \end{center}
    \caption{Comparison of the running times for network construction in Pre-beaded I1-BSTH, Post-beaded I1-BSTH, and MSTH}
    \label{figNew3}
\end{figure}

\clearpage
\begin{figure}[htb]
    \begin{center}
    \includegraphics[scale=0.4]{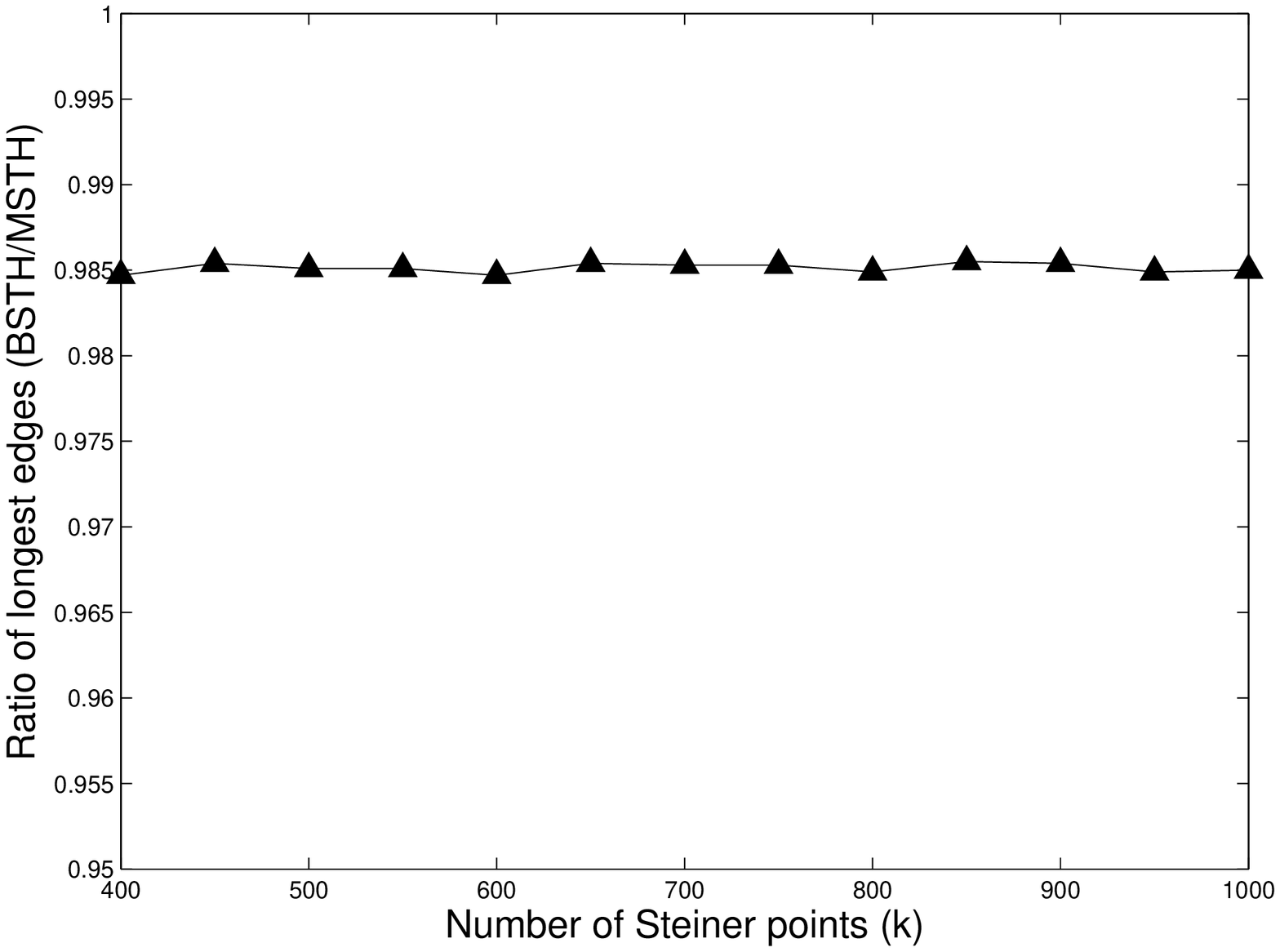}
    \end{center}
    \caption{The ratio $\ell^k_{\mathrm{BSTH}}/ \ell^k_{\mathrm{MSTH}}$ for $n=200$ and $400\leq k\leq 1000$}
    \label{figNew4}
\end{figure}

\begin{thebibliography}{1}
\bibitem{arampatzis}Arampatzis T., Lygeros L., and Manesis J.S.: `A survey of applications of wireless sensors and wireless sensor networks',
{Proc. 13th Mediterranean Conf. Control and Automation}, 2005, pp. 719--724.

\bibitem{mainwaring}Mainwaring A., Polastre J., Szewczyk R., Culler D., and Anderson J.: `Wireless sensor networks for habitat monitoring',
{1st ACM Int. Workshop on Wireless Sensor Networks and Applications}, New York, USA, 2002, pp. 88--97.

\bibitem{cheng}Cheng X., Du D-Z., Wang L., and Xu B.: 'Relay sensor placement in wireless sensor networks', Wireless Networks, 2008, 14, pp. 347--355.

\bibitem{xu}Xu K., Hassanein H., Takahara G., and Wang Q.: `Relay node deployment strategies in heterogeneous wireless sensor networks', IEEE Trans. Mobile Computing, 2010, 9, pp. 145--159.

\bibitem{boulis}Boulis A., Ganeriwal S., and Srivastava M.B.: `Aggregation in sensor networks: an energy-accuracy trade-off', Ad Hoc Networks, 2003, pp. 317--331.

\bibitem{intan}Intanagonwiwat C., Govindan R., and Estrin D.: `Directed diffusion: a scalable and robust communication paradigm for sensor networks', Proc. 6th ACM Annual
Int. Conf. Mobile Computing and Networking, Boston, USA, August 2000, pp. 56-–67.

\bibitem{krish}Krishnamachari L., Estrin D., and Wicker S.: `The impact of data aggregation in wireless sensor networks', Proc. 22nd Int. Con. Distributed
Computing Systems Workshops, Vienna, Austria, July 2002, pp. 575–-578.

\bibitem{lloyd}Lloyd E.L., and Xue G.: `Relay node placement in wireless sensor networks', IEEE Trans. Computers, 2007, 56, pp. 134--138.

\bibitem{brazil2}Brazil M., Ras C.J., and Thomas D.A.: `The bottleneck $2$-connected $k$-Steiner network problem for $k\leq 2$', Discrete Applied Mathematics, 2012, 160, pp. 1028--1038.

\bibitem{brazil3}Brazil M., Ras C.J., and Thomas D.A.: `Approximating minimum Steiner point trees in Minkowski planes', {Networks}, 2010, {56}, pp. 244--254.

\bibitem{bred}Bredin J.L., Demaine E.D., Hajiaghayi M.T., and Rus D., `Deploying sensor nets with guaranteed capacity and fault tolerance', {Proc. 6th ACM Int. Symp. Mobile Ad Hoc Networking and Computing}, New York, USA, 2005, pp. 309--319.

\bibitem{wang}Wang L., and Du D.Z.: `Approximations for a bottleneck Steiner tree problem', {Algorithmica}, 2002, {32}, pp. 554--561.

\bibitem{bae}Bae S.W., Lee C., and Choi S.: `On exact solutions to the Euclidean bottleneck Steiner tree problem',
{Inf. Proc. Letters}, 2010, {110}, pp. 672--678.

\bibitem{boua}Bouabdallah F., Bouabdallah N., and Boutaba R.: `On balancing energy consumption in wireless sensor networks',
{IEEE Trans. Vehicular Technology}, 2009, {58}, pp. 2909--2924.

\bibitem{kalp}Kalpakis K., and Tang S.: `A combinatorial algorithm for the maximum lifetime data gathering with aggregation problem in sensor networks', {Computer Communications}, 2009, {32}, pp. 1655-–1665.

\bibitem{zhang}Zhang H., and Shen H.: `Balancing energy consumption to maximize network lifetime in data-gathering sensor networks',
{IEEE Trans. Parallel and Distributed Systems}, 2009, {20}, pp. 1526--1539.

\bibitem{wang3}Wang Q., and Zhang T.: `Bottleneck zone analysis in energy-constrained wireless sensor networks', {IEEE Commun. Letters}, 2009, {13}, pp. 423--425.

\bibitem{xin}Xin Y., Guven T., and Shayman M.: `Relay deployment and power control for lifetime elongation in sensor networks', {IEEE Int. Conf. Commun.}, 2006, {8}, pp. 3461--3466.

\bibitem{du-2}Du X., Liu X., and Xiao Y.: `Density-varying high-end sensor placement in heterogeneous wireless sensor networks', {IEEE Int. Conf. Commun.}, 2009, pp. 1--6.

\bibitem{li}Li J.S., Kao H.C., and Ke J.D.: `Voronoi-based relay placement scheme for wireless sensor networks', {IET Communications}, 2009, {3}, pp. 530-–538.

\bibitem{wang2}Wang F., Wang D., and Liu J.: `Traffic-aware relay node deployment for data collection in wireless sensor networks',
{Proc. 6th Annual IEEE Commun. Society Conf. Sensor, Mesh and Ad Hoc Communications and Networks}, 2009, pp. 351--359.

\bibitem{sarrafzadeh}Sarrafzadeh M., and Wong C.K.: `Bottleneck Steiner trees in the plane', {IEEE Trans. Computers}, 1992, {41}, pp. 370--374.

\bibitem{drezner}Drezner Z., and Wesolowsky G.O.: `A new method for the multifacility minimax location problem', {Journal of the Operational Research Society}, 1978, {29}, pp. 1095--1101.

\bibitem{love}Love R.F., Wesolowsky G.O., and Kraemer S.A.: `A multifacility minimax location method for Euclidean distances', {Int. Journal of Production Research}, 2009, {11}, pp. 37--45.

\bibitem{du1}Du D.Z., Wang L., and Xu B.: `The Euclidean bottleneck Steiner tree and Steiner tree with minimum number of Steiner points', {7th Annual Int. Conf. Computing and Combinatorics}, Guilin, China, August 2001, pp. 509--518.

\bibitem{bae1}Bae S.W., Choi S., Lee C., and Tanigawa S.: `Exact algorithms for the bottleneck Steiner tree problem', {Algorithmica}, 2011, {61}, pp. 924--947.

\bibitem{brazil}Brazil M., Ras C.J., Swanepoel K., and Thomas D.A.: `Generalised $k$-Steiner tree problems in normed planes', Submitted for publication, arXiv:1111.1464 [math.CO].

\bibitem{lee}Lee J.: `A first course in combinatorial optimization', Cambridge Texts in Applied Mathematics, Cambridge University Press, New York, 2004.

\bibitem{prom}Promel H.J., and Steger A.: `A new approximation algorithm for the Steiner tree problem with performance ratio $5/3$', {Journal of Algorithms}, 2000, {36}, pp. 89--101.

\bibitem{gilbert}Gilbert E.N., and Pollak H.O.: `Steiner minimal trees', {SIAM Journal of Applied Mathematics}, 1968, {16}, pp. 1--29.
\end{thebibliography}
\end{document}